\numberwithin{equation}{section}
\newtheorem{theorem}{Theorem}[section]
\newtheorem{corollary}[theorem]{Corollary}
\newtheorem{lemma}[theorem]{Lemma}
\newtheorem{prop}[theorem]{Proposition}
\newtheorem{remark}[theorem]{Remark}
\theoremstyle{definition}
\def\dashint{\operatorname%
{\,\,\text{\bf-}\kern-.98em\DOTSI\intop\ilimits@\!\!}}
\def\\det{\text{det}}
\def\.5{\frac{1}{2}}
\newcommand{\RN}[1]{%
  \textup{\uppercase\expandafter{\romannumeral#1}}%
}
\newcommand{\dist}{\text{dist}}
\renewcommand{\epsilon}{\varepsilon}
\newcounter{marnote}
\begin{document}
\title[An Optimization Problem in Heat Conduction]
{An Optimization Problem in Heat Conduction With Volume Constraint and Double Obstacles}

\author[X. Li]{Xiaoliang Li}
\address[X. Li]{School of Mathematical Sciences, Laboratory of Mathematics and Complex Systems, MOE, Beijing Normal University, Beijing, 100875, China.}
\email{xiaoliangli@mail.bnu.edu.cn}

\author[C. Wang]{Cong Wang}
\address[C. Wang]{School of Mathematical Sciences, Laboratory of Mathematics and Complex Systems, MOE, Beijing Normal University, Beijing, 100875, China.}
\email{cwang@mail.bnu.edu.cn}

\keywords{Optimal design problems, Double obstacles, Free boundary, Optimal regularity}

\subjclass[2010]{35J20, 35R35, 49N60, 80A22}

\begin{abstract}
We consider the optimization problem of minimizing $\int_{\mathbb{R}^n}|\nabla u|^2\,\mathrm{d}x$ with double obstacles $\phi\leq u\leq\psi$ a.e. in $D$ and a constraint on the volume of $\{u>0\}\setminus\overline{D}$, where $D\subset\mathbb{R}^n$ is a bounded domain. By studying a penalization problem that achieves the constrained volume for small values of penalization parameter, we prove that every minimizer is $C^{1,1}$ locally in $D$ and Lipschitz continuous in $\mathbb{R}^n$ and that the free boundary $\partial\{u>0\}\setminus\overline{D}$ is smooth. Moreover, when the boundary of $D$ has a plane portion, we show that the minimizer is $C^{1,\frac{1}{2}}$ up to the plane portion.
\end{abstract}

\maketitle


\section{Introduction and main results}

In this paper we study a variational problem with double obstacles and a volume constraint, which naturally arises in the study of optimal thermal insulation. More precisely, the physical motivation of our study comes from an optimal design problem in heat conduction which contains a model that may briefly be described as follows:

There is a room with two positive temperature profiles while its exterior environment temperature is fixed to be zero. One is led to keep the temperature inside the room between two given profiles in a way that minimizes the energy, by surrounding a prescribed volume of insulating material outside the room.

Such an optimization problem with volume constraint arising in heat conduction has attracted the interest of many authors, see for instance \cite{AAC 1986,ACS 1988,FMW 2006,FRW 2005,L 1996,T 2005,TU2017,Y 2016}. As far as we know, the volume constrained problems of double obstacle type have not been considered so far.

Mathematically, let $D$ be a smooth and bounded domain in $\mathbb{R}^n$ ($n\geq 2$), and let $\phi,\psi\in C^{2}(\overline{D})$ be two positive functions satisfying
$$\phi\leq\psi~\text{in}~D\quad\text{and}\quad\phi<\psi~\text{on}~\partial D.$$
Given $\mu>0$, we are concerned with the following problem:
\begin{equation}\label{eq:P}
\text{minimize}~J(u)=\int_{\mathbb{R}^n}|\nabla u|^2\,\mathrm{d}x\quad\text{over}~K_\mu,\tag{$P$}
\end{equation}
where
$$K_{\mu}=\left\{u\in H^1(\mathbb{R}^n)\mid\phi\leq u\leq\psi~\text{a.e.~in}~D,~\mathcal{L}^n(\{u>0\}\setminus\overline{D})=\mu\right\},$$
and $\mathcal{L}^n$ denotes the $n$-dimensional Lebesgue measure. In this setting, $\phi$ and $\psi$ are called lower and upper obstacles, respectively; the set $\partial\{u>0\}\setminus\overline{D}$ is a free boundary which determines the distribution of insulating material in the  above physical model. We also mention that problem \eqref{eq:P} is related to the optimal interface problem in fluids; see for instance \cite{AFMT 1999,LT 2006}.

The aim of this paper is to prove the existence of solutions to problem \eqref{eq:P}, and to study the regularity of the solutions as well as that of the corresponding free boundary. In this regard, when the upper obstacle $\psi$ is removed and the lower obstacle $\phi$ is required to be compactly supported in $D$, problem \eqref{eq:P} was studied in \cite{Y 2016}. Here we are considering the double obstacle case in a more general setting where the obstacles are allowed not to vanish on $\partial D$. Accordingly, differently from the single obstacle case treated in \cite{Y 2016}, we will encounter twofold difficulties respectively from double obstacle type problems and optimization problems with volume constraint. For this reason, we introduce below related researches on these two topics separately.

Over the past several decades, the single obstacle problems have been extensively studied, see for instance \cite{BK 1973/74,C 1980,C 1988,FS 2019,SY 2021,SY 2021A} and the references therein. However, less results are known for double obstacle problems. They are more complicated to deal with than the single case since the obstacles may touch each other and the geometry of their contact part could be rather bad. We send readers some existing literatures about that. Devoted to regularity of solutions, Dal Maso, Mosco and Vivaldi \cite{DMV 1989} studied the pointwise regularity that the solutions were proved to be continuous if the obstacles satisfy suitable Weiner-type conditions. Afterwards, the H\"{o}lder regularity results of $C^{0,\alpha}$ and $C^{1,\alpha}$ with $0<\alpha<1$ were obtained under various regularity assumptions on obstacles, see for instance \cite{C 1992,L 1991 India,MZ 1991}. In particular, for classical double obstacle problems, Chipot \cite{C 1979} and then Caffarelli and Kinderlehrer \cite{CK 1980} established the optimal regularity of $C^{1,1}$. To the best of our knowledge, up to now very little is known about the regularity of solutions when the obstacles are Lipschitz continuous. Furthermore, general regularity results of the free boundary have not been addressed yet.

Concerning optimization problems under volume constraint, the studies go back to the seminal paper \cite{AAC 1986}. There, Aguilera, Alt and Caffarelli presented a penalization method to establish the existence of minimizers and their regularity properties. This method has been applied to many optimal design problems of such type, see for instance \cite{ACS 1988,FMW 2006,FRW 2005,L 1996,T 2005}. However, in all these literatures, the Dirichlet boundary conditions are imposed on the problems considered there, which are not required in our problem \eqref{eq:P}.

In the present paper, we first employ penalization method to solve problem \eqref{eq:P} in a way of performing nonvolume preserving variations, which is in spirit of \cite{AAC 1986}. Precisely, for $0<\varepsilon<1$, we introduce the following penalization problem instead:
\begin{equation}\label{eq:P-e}
\text{minimize}~J_\varepsilon(u)=\int_{\mathbb{R}^n}|\nabla u|^2\,\mathrm{d}x+f_{\varepsilon}(\mathcal{L}^n(\{x\in\Omega\mid u(x)>0\}))\quad\text{over}~K,\tag{$P_\varepsilon$}
\end{equation}
where $\Omega=\mathbb{R}^n\setminus\overline{D}$,
$$K=\left\{u\in H^1(\mathbb{R}^n)\mid\phi\leq u\leq\psi~\text{a.e.~in}~D\right\},$$
and $f_{\varepsilon}$ is defined as
\begin{equation*}
f_{\varepsilon}(t)=
\begin{cases}
\varepsilon(t-\mu),&\text{if}~t\leq\mu,\\
\frac{1}{\varepsilon}(t-\mu),&\text{if}~t\geq\mu.\\
\end{cases}
\end{equation*}
We will show that problem \eqref{eq:P-e} can recover the original problem \eqref{eq:P} for small $\varepsilon$. Hence, it suffices to establish the existence of solutions to \eqref{eq:P-e}. Since \eqref{eq:P-e} is defined in the whole space $\mathbb{R}^n$, the existence of its solutions is not immediate. For this reason, we adapt the ideas in \cite{L 1996} to introduce further a family of solvable problems defined in balls $B_R$ (see \eqref{eq:P-e-R} in Section \ref{sec:2}) which are equivalent to \eqref{eq:P-e} for large $R$. This ultimately leads to the existence of solutions to original problem \eqref{eq:P}. Meanwhile, as a byproduct, we find the fact that any solution to problem \eqref{eq:P} has a bounded support. Physically, this agrees with the intuition when we go back to the model described at the beginning.

With the help of problems \eqref{eq:P-e} and \eqref{eq:P-e-R}, we also establish similar regularity results to those in \cite{Y 2016}. Indeed, we obtain $C^{1,1}$-regularity of the solutions inside $D$ as well as optimal Lipschitz regularity in the whole space $\mathbb{R}^n$. And the free boundary is proved to be a locally smooth surface, except for a closed set of $(n-1)$-dimensional Hausdorff measure zero. More precisely, our first main result is the following.
\begin{theorem}\label{thm:main 1}
There exists a solution to problem \eqref{eq:P}. Any solution $u$ is $C^{1,1}$ locally in $D$ and Lipschitz continuous in $\mathbb{R}^n$. And the free boundary $\partial\{u>0\}\setminus\overline{D}$ is smooth except for a closed set of $\mathcal{H}^{n-1}$-measure zero.
\end{theorem}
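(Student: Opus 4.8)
The plan is to realize the constrained problem \eqref{eq:P} as the $\varepsilon\to 0$ limit of the unconstrained penalized problems \eqref{eq:P-e}, which are themselves treated through the localized problems \eqref{eq:P-e-R} on balls $B_R$, and then to transfer regularity from the penalized minimizers by invoking the classical theory of double obstacle problems together with Alt--Caffarelli type free boundary regularity. As a preliminary reduction, replacing $u$ by $u^+$ does not increase $J$ or $J_\varepsilon$ and, since $\phi>0$, preserves membership in $K$, $K_\mu$; so throughout we take $u\ge 0$.

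For \eqref{eq:P-e-R} on a fixed ball the functional $J_\varepsilon$ is coercive on $H^1_0(B_R)$ and weakly lower semicontinuous: the Dirichlet term is convex, and if $u_k\rightharpoonup u$ then $u_k\to u$ a.e. along a subsequence, so $\chi_{\{u>0\}}\le\liminf\chi_{\{u_k>0\}}$ a.e. and Fatou gives $\mathcal{L}^n(\{u>0\}\cap\Omega)\le\liminf\mathcal{L}^n(\{u_k>0\}\cap\Omega)$, hence lower semicontinuity of the continuous nondecreasing penalization term; the constraint set is closed and convex, so the direct method yields a minimizer. A comparison with the competitor obtained by setting $u\equiv 0$ outside a large ball, in the spirit of \cite{L 1996}, together with the uniform $H^1$-bound coming from $J_\varepsilon(u)\le J(v)$ for a fixed $v\in K_\mu$ and the maximum principle bound $0\le u\le\sup_{\overline D}\psi$ in $\Omega$, shows that every minimizer of \eqref{eq:P-e-R} vanishes outside a ball whose radius depends only on $\varepsilon$ and the data; consequently, for $R$ large it solves \eqref{eq:P-e}, so solutions of \eqref{eq:P-e} exist and have bounded support. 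Passing to the limit $\varepsilon\to 0$, the uniform $H^1$-bound produces a subsequence $u_\varepsilon$ converging weakly and a.e. to some $u_0\in K$ with $\mathcal{L}^n(\{u_0>0\}\cap\Omega)\le\mu$; once the volumes of the $u_\varepsilon$ are shown to equal $\mu$, the uniform density estimates force equality for $u_0$ as well, and a comparison (using $f_\varepsilon(\mu)=0$) shows $u_0$ minimizes $J$ over $K_\mu$, while the bounded-support property descends to $u_0$.

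The step I expect to be the main obstacle is the identity $\mathcal{L}^n(\{u_\varepsilon>0\}\cap\Omega)=\mu$ for all small $\varepsilon$. On the one hand, if the volume exceeded $\mu$ along a sequence $\varepsilon\to 0$, the inequality $J(u_\varepsilon)+\tfrac1\varepsilon\big(\mathcal{L}^n(\{u_\varepsilon>0\}\cap\Omega)-\mu\big)\le J(v)$ with $v\in K_\mu$ already forces the excess to be $O(\varepsilon)$; to rule out a strictly positive excess one performs an inward perturbation of $\{u_\varepsilon>0\}$ which, by the linear non-degeneracy of $u_\varepsilon$ at the free boundary, reduces the volume at a definite rate while raising the Dirichlet energy only in a controlled way, contradicting minimality once $\tfrac1\varepsilon$ dominates. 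On the other hand, if the volume were strictly below $\mu$, then near a free boundary point $u_\varepsilon$ locally minimizes $\int|\nabla u|^2+\varepsilon\,\chi_{\{u>0\}}$ (a small enlargement of the positive set keeps the volume below $\mu$), so replacing $u_\varepsilon$ by its harmonic extension on a small ball is admissible and strictly decreases the energy for $\varepsilon$ small, again a contradiction. The quantitative inputs --- linear non-degeneracy and uniform-in-$\varepsilon$ interior and boundary density estimates --- are the analogues of the classical Alt--Caffarelli estimates, here adapted to a problem posed on all of $\mathbb{R}^n$ with no Dirichlet data, and carrying this adaptation out is the technically heaviest part of the argument.

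For the regularity assertions, fix a solution $u$ of \eqref{eq:P}, which by the above also solves \eqref{eq:P-e} for small $\varepsilon$, and work separately in $D$ and in $\Omega$. Variations with support compactly contained in $D$ leave $\mathcal{L}^n(\{u>0\}\cap\Omega)$ unchanged, so locally in $D$ the function $u$ solves the double obstacle problem with obstacles $\phi,\psi\in C^2(\overline D)$; hence $\Delta u$ equals $\Delta\phi$ on $\{u=\phi\}$, $\Delta\psi$ on $\{u=\psi\}$ and $0$ in between, so $\Delta u\in L^\infty_{\mathrm{loc}}(D)$ and $u\in C^{1,1}_{\mathrm{loc}}(D)$ by the Caffarelli--Kinderlehrer theory \cite{CK 1980,C 1979}. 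In $\Omega$ the function $u\ge0$ is harmonic in $\{u>0\}$ and, near the free boundary, is both a sub- and a super-minimizer for Alt--Caffarelli type functionals with flux parameter lying in $[\varepsilon,\tfrac1\varepsilon]$, so harmonic replacement on small balls yields optimal linear growth away from the free boundary, i.e. local Lipschitz continuity. Since $\phi>0$ forces the trace of $u$ on $\partial D$ to be $\ge\phi|_{\partial D}>0$, the free boundary stays a positive distance from $\partial D$ and there $u$ is harmonic from the $\Omega$-side and $C^{1,1}$ from the $D$-side, hence Lipschitz across $\partial D$; combining this with the bounded support and the bound $0\le u\le\sup_{\overline D}\psi$ gives Lipschitz continuity on all of $\mathbb{R}^n$. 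Finally, near $\partial\{u>0\}\setminus\overline D$ the minimality property places $u$ within the scope of Alt--Caffarelli free boundary theory: the reduced free boundary is a $C^{1,\alpha}$ hypersurface, real-analytic after bootstrapping, and the remaining singular set has $\mathcal{H}^{n-1}$-measure zero, which is the stated conclusion.
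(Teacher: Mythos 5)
Your overall architecture matches the paper's: penalize, localize to $B_R$, show bounded support, prove the volume identity $\mathcal{L}^n(\{u_\varepsilon>0\}\cap\Omega)=\mu$ for small $\varepsilon$ by inward/outward perturbations, and then read off interior $C^{1,1}$ from Caffarelli--Kinderlehrer and exterior Lipschitz plus free boundary regularity from Alt--Caffarelli. You also correctly note the converse implication (a solution of \eqref{eq:P} solves \eqref{eq:P-e}), which is what lets you treat \emph{any} solution. Two remarks on the volume step: the limit passage $\varepsilon\to0$ you describe is unnecessary (for fixed $\varepsilon<\varepsilon_0$ the minimizer of \eqref{eq:P-e} already solves \eqref{eq:P}), and the quantity that must be controlled uniformly in $\varepsilon$ is the free boundary constant $\lambda_\varepsilon$ in the Hadamard variation, not the nondegeneracy constant (which in this setting scales like $\sqrt{\varepsilon}$ and degenerates); the paper obtains $c\le\lambda_\varepsilon\le C$ by a trace inequality, the relative isoperimetric inequality, and comparison with harmonic functions, precisely because the boundary data on $\partial D$ vary with $\varepsilon$ and the perturbation lemma of Aguilera--Alt--Caffarelli cannot be quoted directly. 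You flag this as the hard point but do not supply the argument.

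The genuine gap is the claim that $u$ is Lipschitz across $\partial D$ because it is ``harmonic from the $\Omega$-side and $C^{1,1}$ from the $D$-side.'' Both of these are only \emph{interior} statements: the $C^{1,1}_{\mathrm{loc}}(D)$ bound degenerates as you approach $\partial D$ (the coincidence sets $\{u=\phi\}$, $\{u=\psi\}$ may accumulate at $\partial D$ and the obstacles do not vanish there), and harmonicity of $u$ in $D_\delta\setminus\overline D$ gives no gradient bound up to $\partial D$ without knowing that the boundary values $u|_{\partial D}$ are already Lipschitz --- which is exactly what you are trying to prove, so the reasoning is circular. This is where the paper does its main technical work (Section 5): it extends $\phi$ and $\psi$ harmonically into $D_{\delta_\varepsilon}\setminus\overline D$ to obtain piecewise-smooth, globally Lipschitz obstacles $\phi_\varepsilon,\psi_\varepsilon$, shows that $u$ solves the resulting double obstacle problem in $D_{\delta_\varepsilon}$ (Lemma \ref{u-epsi}, which uses that the positivity set is untouched by such competitors), and then proves $C^{0,1}(\overline D_{\delta_\varepsilon})$ regularity for a double obstacle problem with merely Lipschitz obstacles by separating the two coincidence sets via $\phi-\sigma$, $\psi+\sigma$, reducing locally to single obstacle problems, and passing to the limit $\sigma\to0$ with a uniform gradient bound (Lemmas \ref{control} and \ref{Lip}). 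Without an argument of this type your proof establishes $u\in C^{0,1}_{\mathrm{loc}}(\mathbb{R}^n\setminus\partial D)$ but not the asserted Lipschitz continuity on all of $\mathbb{R}^n$.
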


Since the obstacles $\phi$ and $\psi$ work on $\partial D$ in our case unlike the one considered in \cite{Y  2016}, we are able to investigate further the behavior of the solution $u$ near $\partial D$. Indeed, by considering the flat case, we derive the following improved regularity up to $\partial D$ from both inside and outside of $D$.

\begin{theorem}\label{thm:main 2}
Suppose that the boundary of $D$ has a plane portion $\Gamma$. Let $u$ be a solution to problem \eqref{eq:P}. Then
$$u\in C^{1,\frac{1}{2}}(D\cup\Gamma),$$
and for a small neighborhood $\mathcal{N}$ of $D$,
$$u\in C^{1,\frac{1}{2}}((\mathcal{N}\setminus\overline{D})\cup\Gamma).$$
\end{theorem}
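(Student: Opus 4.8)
The plan is to reduce to a local statement near a point of the plane portion $\Gamma$, normalize coordinates so that $\Gamma\subset\{x_n=0\}$ with $D$ locally on the side $\{x_n>0\}$, and then prove the two one-sided $C^{1,1/2}$ estimates separately by reflection-type arguments combined with the regularity already available from Theorem \ref{thm:main 1}. On the inside $D\cup\Gamma$: near $\Gamma$ the solution solves a double obstacle problem with $C^2$ obstacles $\phi\le u\le\psi$, and since $\phi<\psi$ on $\partial D$ (hence on $\Gamma$ and, by continuity, in a neighborhood of $\Gamma$ inside $D$), only one obstacle can be active at a time there; so locally $u$ is a solution of a single obstacle problem with a $C^2$ obstacle. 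The key point is that, because the obstacles act only in $D$, across $\Gamma$ the function $u$ couples the interior obstacle problem to the exterior free boundary problem governing $\{u>0\}\setminus\overline D$, and one does not in general get $C^{1,1}$ up to $\Gamma$—only $C^{1,1/2}$. First I would establish the natural transmission condition on $\Gamma$: $u$ is continuous across $\Gamma$, and (from the Euler--Lagrange equation / optimality, using that variations supported near $\Gamma$ are admissible on both sides) the normal derivative $\partial_{x_n}u$ matches from the two sides wherever neither obstacle is active and $u>0$.

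The heart of the argument is a blow-up / monotonicity scheme at a point $x_0\in\Gamma$. Consider rescalings $u_r(x)=u(x_0+rx)/\rho(r)$ with $\rho(r)$ chosen by an $L^2$-type normalization on $\partial B_1$ (Almgren-type), or alternatively work directly with the growth estimate. I would show a dichotomy: either $u(x_0)>0$, in which case $u$ is harmonic on both sides of $\Gamma$ near $x_0$ with a transmission condition, and elliptic transmission regularity gives $u\in C^{1,1/2}$ (indeed the half-power is exactly what one expects from an even reflection of a harmonic function across an interface where only the trace, not the flux structure, is symmetric); or $u(x_0)=0$, and then on the exterior side $x_0$ is a free boundary point with the quadratic growth from Theorem \ref{thm:main 1}, while on the interior side the obstacle constraint forces comparable growth, and a comparison with the explicit half-space solution (the function $c\,(x_n)_+^{1/2}$-type barrier, or more precisely the harmonic function in a half-ball vanishing on a portion of $\Gamma$) yields the $C^{1,1/2}$ modulus. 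The exterior statement $u\in C^{1,1/2}((\mathcal N\setminus\overline D)\cup\Gamma)$ follows from the same analysis restricted to the side $\{x_n<0\}$: on $\Gamma\cap\partial\{u>0\}$ the solution vanishes and a barrier argument controls the growth, while on $\Gamma\cap\{u>0\}$ harmonicity up to the flat boundary together with the matched Neumann data gives the estimate.

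Concretely the steps I would carry out are: (i) localize and flatten, recording the transmission/obstacle structure near $\Gamma$; (ii) prove the basic nondegeneracy and quadratic growth of $u$ at points of $\Gamma$, upgrading the global Lipschitz bound of Theorem \ref{thm:main 1}—this uses the subharmonicity of $u$ where it is positive and a comparison argument in half-balls $B_r^+(x_0)$; (iii) prove the half-power decay estimate $\sup_{B_r(x_0)}|\nabla u-\nabla u(x_0)|\le C r^{1/2}$ by a Campanato/dyadic iteration, where at each scale one compares $u$ with the solution of the corresponding linear transmission problem in a half-ball with the active obstacle frozen; (iv) assemble the pointwise estimates into the claimed $C^{1,1/2}$ regularity on $D\cup\Gamma$ and on $(\mathcal N\setminus\overline D)\cup\Gamma$. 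The main obstacle I anticipate is step (iii): one must handle simultaneously the possibility that an obstacle is active on part of a small half-ball and that the free boundary $\partial\{u>0\}$ meets $\Gamma$, so the frozen linear problem is itself a mixed Dirichlet/obstacle problem in a half-ball, and the decay rate $1/2$ must be shown to be the worst case (coming from the harmonic function $\mathrm{Im}\,(x'+\mathrm{i}x_n)^{1/2}$ type profile on a half-plane with mixed boundary data on $\{x_n=0\}$). Establishing that this exponent is not degraded by the interaction of the two sides, and that it is attained, is where the flatness of $\Gamma$ is used essentially; the curved case is genuinely harder and is, correctly, outside the scope of this theorem.
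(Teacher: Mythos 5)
Your proposal is built on a picture of the geometry near $\Gamma$ that does not match the actual situation, and this causes the central steps to be aimed at the wrong difficulty. First, the free boundary never reaches $\Gamma$: by Lemma \ref{D-nghd2} one has $u>0$ in all of $D_{\delta}\setminus\overline{D}$, and $u\geq\phi>0$ on $\overline{D}$, so $u$ is bounded below by a positive constant in a full neighborhood of $\partial D$ and is harmonic in $D_\delta\setminus\overline D$ by \eqref{eq:Delta-u-e=0}. Consequently the branch of your dichotomy with $u(x_0)=0$, the ``quadratic growth'' at free boundary points of $\Gamma$, the $c\,(x_n)_+^{1/2}$ barrier, and the mixed Dirichlet/obstacle frozen problem in step (iii) are all vacuous or misdirected: there is no interaction between $\partial\{u>0\}$ and $\Gamma$ to control. (Note also that the exterior problem is of Bernoulli type with \emph{linear}, not quadratic, growth at its free boundary, so the growth rate you invoke from Theorem \ref{thm:main 1} is not available in the form you use it.) Second, at a point of $\Gamma$ where $u>0$ and neither obstacle is active nearby, $u$ is harmonic in a full ball across $\Gamma$ (small perturbations on both sides are admissible and volume-preserving), hence smooth there; your claim that ``transmission regularity'' caps the regularity at $C^{1,1/2}$ at such points is incorrect, and the $\mathrm{Im}\,(x'+\mathrm{i}x_n)^{1/2}$ mixed-boundary profile is not the relevant model.

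The genuine obstruction, which your proposal does not identify, occurs where a coincidence set $\{u=\phi\}$ or $\{u=\psi\}$ meets $\Gamma$. The paper handles this by extending the obstacles harmonically into $D_{\delta_\varepsilon}\setminus\overline D$ (Lemma \ref{u-epsi}), so that $u$ solves a double obstacle problem in a full neighborhood of $\Gamma$ whose obstacles are $C^{1,1}$ on each side of $\Gamma$ but only Lipschitz across it; separating the two coincidence sets reduces this locally to the single \emph{rooftop-like} obstacle problem of Petrosyan--To \cite{PT 2010}, whose optimal regularity is exactly $C^{1,\frac12}$. That result yields the $C^{0,\frac12}$ estimate for the tangential gradient on $\Gamma$, and combining it with the bound $|\Delta u|\leq C$ on each side (Euler--Lagrange equation in $B^+$, harmonicity in $B^-$) and the boundary Schauder-type theorem \cite[Theorem 8.34]{GT 2001} gives the two one-sided $C^{1,\frac12}$ estimates. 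Without the identification of the rooftop structure (or some equivalent mechanism producing the exponent $\frac12$ from the Lipschitz kink of the extended obstacle across $\Gamma$), your Campanato iteration has no correct comparison problem at each scale, so the proposal as written does not close.
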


It would be worthwhile to point out that the argument we exploit here to prove above regularity results skips the multiple parameters used in \cite{Y 2016}, and it enables us to derive Lipschitz regularity for classical double obstacle problem where the obstacles are Lipchitz continuous, see Remark \ref{rk:class-d-obstacle} for details. As already mentioned, such a regularity result was missing in the literature. Besides, we remark that since there may have a jump in $\nabla u$ across $\partial\{u>0\}\setminus\overline{D}$, the Lipschitz continuity is expected to be globally optimal as stated in \cite{AAC 1986,L 1996} for other optimization problems under volume constraint.

Now let us comment the proofs of Theorems \ref{thm:main 1} and \ref{thm:main 2}. The main idea is to split the problem into interior and exterior problems and especially to investigate the behavior of solutions near the boundary $\partial D$.
By illustrating that the set $\partial\{u>0\}$ stays away from $\partial D$, we carefully enlarge the obstacles $\phi$ and $\psi$ to be piecewise smooth in a small neighborhood $D_\delta$ of $D$ and to be Lipschitz continuous across $\partial D$. It is thus observed that $u$ still solves this extended double obstacle problem in $D_\delta$. Then by reducing that problem locally to a single obstacle problem and employing approximation arguments, we obtain the regularity of $u$ near $\partial D$. Finally, together with the fine properties gained from the penalization problem \eqref{eq:P-e}, we deduce the desired regularity of $u$ and that of the free boundary.

The paper is organized as follows. In Section \ref{sec:2}, we introduce a family of auxiliary problems \eqref{eq:P-e-R} where the admissible functions are supported in $B_R$. We show the existence and some fine properties of a solution to problem \eqref{eq:P-e-R}. Then in Section \ref{sec:3}, we further perform the analysis as in Section \ref{sec:2} to study problem \eqref{eq:P-e}, establishing qualitative results analogous to those for problem \eqref{eq:P-e-R}. We also derive the regularity properties of the free boundary. In Section \ref{sec:4}, we prove that problem \eqref{eq:P} is recovered from \eqref{eq:P-e} for small values of $\varepsilon$. At last in Section \ref{sec:5}, by illustrating the solution also solves a double obstacle problem near $\partial D$ where obstacles are piecewise smooth, we obtain the regularity of $C^{0,1}$ in $\mathbb{R}^n$ and $C^{1,\frac{1}{2}}$ up to $\partial D$. This, together with the local properties established in the previous sections, completes the proofs of Theorem \ref{thm:main 1} and Theorem \ref{thm:main 2}.

We fix some notations throughout this paper. For any $x=(x_1,x_2,\cdot\cdot\cdot,x_n)\in\mathbb{R}^n$ and $r>0$, define
$$B_r(x)=\{y\in\mathbb{R}^n\mid |y-x|<r\},\quad B'_r(x)=\{y\in B_r(x)\mid y_n=x_n\},$$
$$B^+_r(x)=\{y\in B_r(x)\mid y_n>x_n\},\quad B^-_r(x)=\{y\in B_r(x)\mid y_n<x_n\}.$$
If $x=0$, we would omit it and write $B_r$, $B'_r$ and $B_r^{\pm}$ for simplicity.

\section{Solutions to problem \eqref{eq:P-e-R}}\label{sec:2}

In this section, we introduce a family of auxiliary problems \eqref{eq:P-e-R} for fixed $0<\varepsilon<1$ by following the idea of \cite{L 1996}. We will show the existence of a solution $u_{\varepsilon,R}$ to problem \eqref{eq:P-e-R} and also present certain fine properties of $u_{\varepsilon,R}$, including Lipschitz regularity, nondegeneracy and density estimate. Moreover, we prove that the set $\{u_{\varepsilon,R}>0\}$ is bounded independently on $R$ through some estimates on the positive phase of $u_{\varepsilon,R}$ and Vitali covering lemma. These results will be used to study problem \eqref{eq:P-e} in the next section.

Take a positive number $R_0$ such that
\begin{equation}\label{R_0}
\mathcal{L}^n(B_{R_0}\setminus\overline{D})>\mu\quad\text{and}\quad\{x\in\mathbb{R}^n\mid\dist(x,D)<1\}\subset B_{R_0}.
\end{equation}
For $R\geq R_0$, let us consider an auxiliary version of the penalization problem \eqref{eq:P-e} as follows:
\begin{equation}\label{eq:P-e-R}
\text{minimize}~J_{\varepsilon,R}(u)=\int_{B_R}|\nabla u|^2\,\mathrm{d}x+f_{\varepsilon}(\mathcal{L}^n(\{x\in\Omega_R\mid u(x)>0\}))\quad\text{over}~K_R, \tag{$P_{\varepsilon,R}$}
\end{equation}
where $\Omega_R=B_R\setminus\overline{D}$ and
$$K_R=\left\{u\in H_0^1(B_R)\mid\phi\leq u\leq\psi~\text{a.e.~in}~D\right\}.$$

\begin{lemma}\label{lem:P-e-R,existence}
There exists a solution to problem \eqref{eq:P-e-R}. Moreover, if $u_{\varepsilon,R}$ is such a solution, then
$$0\leq u_{\varepsilon,R}\leq\sup\limits_D\phi\quad\mathrm{a.e.~in}~B_R.$$
\end{lemma}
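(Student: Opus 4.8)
The plan is to establish the existence assertion via the direct method of the calculus of variations, and then to prove the pointwise bounds by a truncation (obstacle-competitor) argument exploiting the sign structure of $\phi,\psi$ and $f_\varepsilon$. First I would verify that $K_R$ is nonempty: the function obtained by taking a fixed $H^1$-extension of $\phi$ (or of the pair $\phi,\psi$) from $\overline D$ into $B_R$ and then cutting it off to vanish near $\partial B_R$ lies in $K_R$, using $R\geq R_0$ and \eqref{R_0} so there is room to perform the cutoff in $\Omega_R$. Next, fix a minimizing sequence $u_j\in K_R$ for $J_{\varepsilon,R}$. Since $f_\varepsilon(t)\geq\varepsilon(t-\mu)\geq-\varepsilon\mu$ is bounded below, $J_{\varepsilon,R}(u_j)$ bounded above forces $\int_{B_R}|\nabla u_j|^2\,\mathrm{d}x$ bounded; with the Poincar\'e inequality on $H_0^1(B_R)$ this gives a uniform $H^1$-bound, so up to a subsequence $u_j\rightharpoonup u$ weakly in $H_0^1(B_R)$ and strongly in $L^2(B_R)$ (and a.e. after a further subsequence). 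The constraint $\phi\leq u_j\leq\psi$ a.e. in $D$ passes to the a.e. limit, so $u\in K_R$.

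The one genuinely delicate point is the lower semicontinuity of the penalization term $f_\varepsilon(\mathcal L^n(\{x\in\Omega_R\mid u(x)>0\}))$, since $t\mapsto f_\varepsilon(t)$ is increasing while $u\mapsto\mathcal L^n(\{u>0\})$ is only lower semicontinuous under a.e. convergence by Fatou applied to $\mathbf 1_{\{u>0\}}$; an increasing function of a lower semicontinuous functional need not be lower semicontinuous unless the increasing function is itself lower semicontinuous — and here $f_\varepsilon$ is continuous, hence $f_\varepsilon$ is nondecreasing and continuous, so from $\mathcal L^n(\{u>0\}\cap\Omega_R)\leq\liminf_j\mathcal L^n(\{u_j>0\}\cap\Omega_R)$ and monotonicity plus continuity of $f_\varepsilon$ one gets $f_\varepsilon(\mathcal L^n(\{u>0\}\cap\Omega_R))\leq\liminf_j f_\varepsilon(\mathcal L^n(\{u_j>0\}\cap\Omega_R))$. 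Combining with weak lower semicontinuity of the Dirichlet energy yields $J_{\varepsilon,R}(u)\leq\liminf_j J_{\varepsilon,R}(u_j)=\inf_{K_R}J_{\varepsilon,R}$, so $u$ is a minimizer. (One must only be a little careful that Fatou's lemma on $\Omega_R$ is applied to the a.e. convergent subsequence, and that the a.e. limit of $u_j$ agrees with the weak $H^1$-limit.)

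Finally I would prove $0\leq u_{\varepsilon,R}\leq\sup_D\phi$ a.e. For the lower bound, compare $u_{\varepsilon,R}$ with its positive part $u_{\varepsilon,R}^+=\max(u_{\varepsilon,R},0)$. Since $\phi>0$ in $D$, we have $u_{\varepsilon,R}^+=u_{\varepsilon,R}\geq\phi$ a.e. in $D$ and $u_{\varepsilon,R}^+\leq\psi$ there, so $u_{\varepsilon,R}^+\in K_R$; moreover $|\nabla u_{\varepsilon,R}^+|\leq|\nabla u_{\varepsilon,R}|$ a.e. with strict inequality on $\{u_{\varepsilon,R}<0\}$ of positive measure if that set had positive measure, and $\{u_{\varepsilon,R}^+>0\}\cap\Omega_R\subset\{u_{\varepsilon,R}>0\}\cap\Omega_R$ so the penalization term does not increase (using that $f_\varepsilon$ is nondecreasing). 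Hence $J_{\varepsilon,R}(u_{\varepsilon,R}^+)\leq J_{\varepsilon,R}(u_{\varepsilon,R})$ with strict inequality unless $\mathcal L^n(\{u_{\varepsilon,R}<0\})=0$, forcing $u_{\varepsilon,R}\geq0$ a.e. For the upper bound, set $M=\sup_D\phi$ and compare with $v=\min(u_{\varepsilon,R},M)$: since $\phi\leq M$, we keep $\phi\leq v$; also $v\leq u_{\varepsilon,R}$ and, where $u_{\varepsilon,R}\leq\psi$ fails to be automatic we note $v\leq M\leq\psi$ on $\partial D$-neighbourhoods is not needed because in $D$ we only must check $v\leq\psi$, which holds as $v\leq u_{\varepsilon,R}\leq\psi$. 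Wait --- more carefully, $v\le u_{\varepsilon,R}$ gives $v\le\psi$ in $D$, and $v\ge\min(\phi,M)=\phi$ in $D$, so $v\in K_R$; truncation from above decreases the Dirichlet energy and shrinks $\{v>0\}\cap\Omega_R\subset\{u_{\varepsilon,R}>0\}\cap\Omega_R$, so again $J_{\varepsilon,R}(v)\leq J_{\varepsilon,R}(u_{\varepsilon,R})$ with strict inequality unless $u_{\varepsilon,R}\leq M$ a.e. I expect the monotonicity-of-$f_\varepsilon$ bookkeeping in the lsc step and the verification that the truncated competitors remain in $K_R$ (i.e. still satisfy both obstacle inequalities in $D$) to be the only places requiring care; everything else is the standard direct method.
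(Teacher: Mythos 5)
Your proposal is correct and follows essentially the same route as the paper: the direct method for existence (including the same monotonicity-plus-continuity observation needed to pass the penalization term to the limit) and comparison with truncated competitors for the $L^\infty$ bounds, which the paper phrases equivalently as a first variation along the one-parameter family interpolating between $u_{\varepsilon,R}$ and its truncation. The only substantive difference is the admissible function witnessing finiteness of the infimum: the paper takes a harmonic extension $w_0$ whose positivity set outside $\overline{D}$ has volume exactly $\mu$, so that $f_\varepsilon$ vanishes on it and $\inf_{K_R}J_{\varepsilon,R}$ is bounded uniformly in $\varepsilon$ and $R$ --- a uniformity that is reused later (Lemma \ref{basic prop}\,(5) and Section \ref{sec:3}) and that a generic cutoff extension does not provide, although for the present lemma alone your construction suffices.
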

\begin{proof}
We first prove that $J_{\varepsilon,R}$ is not always infinite in $K_R$. Let $v_0\in H^1(D)$ be the minimizer of the energy functional $\int_{D}|\nabla u|^2\,\mathrm{d}x$ over
$$K_D:=\left\{u\in H^1(D)\mid\phi\leq u\leq\psi~\text{a.e.~in}~D,~u=\frac{\phi+\psi}{2}~\text{on}~\partial D\right\}.$$
Its existence is guaranteed by direct variational method. Take a smooth and bounded domain $D'$ with
$$D\Subset D'\Subset B_R\quad\text{and}\quad\mathcal{L}^n(D'\setminus\overline{D})=\mu.$$
We extend $v_0$ by taking an auxiliary function $w_0$ such that
\begin{equation*}
\begin{cases}
\Delta w_0=0,\quad&\text{in}~D'\setminus\overline{D},\\
w_0=v_0,\quad&\text{on}~\overline{D},\\
w_0=0,\quad&\text{on}~\overline{B}_R\setminus D'.
\end{cases}
\end{equation*}
Then $w_0\in K_R$. Moreover, the strong maximum principle implies
$$w_0>0\quad\text{a.e.~in}~D'.$$
By the definition of $f$, we have
$$f_{\varepsilon}(\mathcal{L}^n(\{x\in\Omega_R\mid w_0(x)>0\}))=f_{\varepsilon}(\mathcal{L}^n(D'\setminus\overline{D}))=f_{\varepsilon}(\mu)=0.$$
It follows that
$$J_{\varepsilon,R}(w_0)=\int_{D'}|\nabla w_0|^2\,\mathrm{d}x<\infty$$
holds uniformly with respect to $0<\varepsilon<1$ and $R\geq R_0$.

Clearly, $J_{\varepsilon,R}\geq-\mu$ in $K_R$. We set
$$m=\inf\{J_{\varepsilon,R}(u)\mid u\in K_R\}<\infty.$$
Next, we prove $m$ is attained. There exists a minimizing sequence $\{u_k\}\subset K_R$ such that
$$J_{\varepsilon,R}(u_k)\rightarrow m,\quad\text{as}~k\rightarrow\infty.$$
This implies $\{J_{\varepsilon,R}(u_k)\}$ is a bounded sequence, then $\|\nabla u_k\|_{L^2(B_R)}$ is uniformly bounded. Hence, up to a subsequence,
$$u_{k}\rightarrow u_{\varepsilon,R}\quad\text{weakly~in}~H_0^1(B_R),\quad u_{k}\rightarrow u_{\varepsilon,R}\quad\text{a.e.~in}~B_R,\quad\text{as}~k\rightarrow\infty.$$
From this,
$$\phi\leq u_{\varepsilon,R}\leq \psi\quad\text{a.e.~in}~D,$$
\begin{equation*}
\mathcal{L}^n(\{x\in\Omega_R\mid u_{\varepsilon,R}(x)>0\})\leq\liminf_{k\rightarrow\infty}\mathcal{L}^n(\{x\in\Omega_R\mid u_{k}(x)>0\}),
\end{equation*}
and
$$\int_{B_R}|\nabla u_{\varepsilon,R}|^2\,\mathrm{d}x\leq\liminf_{k\rightarrow\infty}\int_{B_R}|\nabla u_k|^2\,\mathrm{d}x.$$
We thus get $u_{\varepsilon,R}\in K_R$ and
$$J_{\varepsilon,R}(u_{\varepsilon,R})\leq\liminf_{k\rightarrow\infty}J_{\varepsilon,R}(u_k)=m.$$
Therefore, $u_{\varepsilon,R}$ is a minimizer of $J_{\varepsilon,R}$ over $K_R$.

We proceed to show the $\mathcal{L}^{\infty}$-estimate on $u_{\varepsilon,R}$. Define for $0<t<1$,
$$u_t=u_{\varepsilon,R}-t\min\{u_{\varepsilon,R},0\},$$
$$u^t=u_{\varepsilon,R}+t\min\{\sup_D\phi-u_{\varepsilon,R},0\}.$$
It is easy to check that $u_t\in K_R$. Also,
$$u_t=u_{\varepsilon,R}>0,\quad\text{if}~u_{\varepsilon,R}>0,$$
$$u_t=(1-t)u_{\varepsilon,R}\leq0,\quad\text{if}~u_{\varepsilon,R}\leq0.$$
Hence we have
$$\mathcal{L}^n(\{x\in\Omega_R\mid u_t(x)>0\})=\mathcal{L}^n(\{x\in\Omega_R\mid u_{\varepsilon,R}(x)>0\}).$$
By the minimality of $u_{\varepsilon,R}$, we have
$$\int_{B_R}|\nabla u_{\varepsilon,R}|^2\,\mathrm{d}x\leq\int_{B_R}|\nabla u_{t}|^2\,\mathrm{d}x.$$
Using the first variation, we get
\begin{equation*}
\begin{split}
0&\leq\int_{B_R}|\nabla u_t|^2\,\mathrm{d}x-\int_{B_R}|\nabla u_{\varepsilon,R}|^2\,\mathrm{d}x\\
&=\int_{B_R\cap\{u_{\varepsilon,R}<0\}}\left(|\nabla u_t|^2-|\nabla u_{\varepsilon,R}|^2\right)\,\mathrm{d}x\\
&=t(t-2)\int_{B_R\cap\{u_{\varepsilon,R}<0\}}|\nabla u_{\varepsilon,R}|^2\,\mathrm{d}x.
\end{split}
\end{equation*}
This implies that
$$u_{\varepsilon,R}\geq0\quad\text{a.e.~in}~B_R.$$
On the other hand, we have
$$u^t=u_{\varepsilon,R},\quad\text{if}~u_{\varepsilon,R}\leq\sup\limits_D\phi,$$
$$u^t=(1-t)u_{\varepsilon,R}+t\sup_D\phi\geq\sup_D\phi,\quad\text{if}~u_{\varepsilon,R}\geq\sup\limits_D\phi.$$
By the above,
$$u^t\geq\phi\quad\text{a.e.~in}~D.$$
Together with $u^t\leq u_{\varepsilon,R}\leq\psi$ a.e. in $D$, we get $u^t\in K_R$. Moreover,
$$\mathcal{L}^n(\{x\in\Omega_R\mid u_{\varepsilon,R}(x)>0\})\geq\mathcal{L}^n(\{x\in\Omega_R\mid u^t(x)>0\}).$$
Again by the minimality of $u_{\varepsilon,R}$ and the first variation, we obtain
$$u_{\varepsilon,R}\leq\sup\limits_D\phi\quad\text{a.e.~in}~B_R.$$
This finishes the proof.
\end{proof}

In order to study the distribution of the positive phase, our next step is to derive the properties of $u_{\varepsilon,R}$ in the exterior domain $\Omega_{R}$.

\begin{lemma}\label{basic prop}
Let $u_{\varepsilon,R}$ be a solution to problem \eqref{eq:P-e-R}. Then $u_{\varepsilon,R}$ satisfies the following properties:
\begin{itemize}
\item [(1)] Lipschitz continuity:  $u_{\varepsilon,R}\in C^{0,1}_{\mathrm{loc}}(\Omega_R)$. Moreover, for any $\Omega'\Subset\Omega_R$, there exists a positive constant $C$ such that
$$||\nabla u_{\varepsilon,R}||_{L^{\infty}(\Omega')}\leq\frac{C}{\sqrt{\varepsilon}},$$
where $C$ depends only on $n$, $\mathrm{dist}(\Omega',\partial\Omega_R)$ and $\sup_D\phi$.
\item [(2)] Nondegeneracy: there exists a positive constant $C=C(n)$ such that for $B_r(x_0)\subset\Omega_{R}$, if
$$\frac{1}{r\mathcal{H}^{n-1}(\partial B_r(x_0))}\int_{\partial B_r(x_0)}u_{\varepsilon,R}\,\mathrm{d}\mathcal{H}^{n-1}\leq C\sqrt{\varepsilon},$$
then $u_{\varepsilon,R}\equiv0$ in $B_{\frac{r}{2}}(x_0)$.
\item [(3)] Positive density: for $\Omega'\Subset\Omega_R$, there exists a positive constant $c$ such that for $B_r(x_0)\subset\Omega'$ with $u_{\varepsilon,R}(x_0)>0$,
$$\frac{\mathcal{L}^n(B_r(x_0)\cap\{u_{\varepsilon,R}>0\})}{\mathcal{L}^n(B_r(x_0))}\geq c\varepsilon^n,$$
where $c$ depends only on $n$, $\mathrm{dist}(\Omega',\partial\Omega_R)$ and $\sup_D\phi$.
\item [(4)] Harmonicity: $u_{\varepsilon,R}$ is harmonic in $\{x\in\Omega_R\mid u_{\varepsilon,R}(x)>0\}$ and subharmonic in $\Omega_R$.
\item [(5)] Volume of positive phase: there exists a positive constant $M$ such that $$\mathcal{L}^n(\{x\in\Omega_{R}\mid u_{\varepsilon,R}(x)>0\})\leq\mu+M\varepsilon,$$
    where M depends only on $n$, $D$, $\mu$, $\phi$ and $\psi$.
\end{itemize}
\end{lemma}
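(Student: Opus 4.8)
My strategy is to establish the five properties roughly in the order (4), (1), (2), (3), (5), since harmonicity in the positive set is the cheapest fact and the others build on it. For (4), I would perform one-sided inner variations $u_{\varepsilon,R}+s\zeta$ with $\zeta\in C_c^\infty(\{u_{\varepsilon,R}>0\}\cap\Omega_R)$ of both signs for small $|s|$: since the positive set is open (after choosing the good representative) the sign of $u_{\varepsilon,R}$ is unchanged by such perturbations, so $f_\varepsilon(\mathcal{L}^n(\cdots))$ is unchanged and minimality forces $\int\nabla u_{\varepsilon,R}\cdot\nabla\zeta=0$, i.e. harmonicity. Subharmonicity in all of $\Omega_R$ comes from testing with $\zeta\le 0$, $\zeta\in C_c^\infty(\Omega_R)$: decreasing $u_{\varepsilon,R}$ can only shrink $\{u_{\varepsilon,R}>0\}$, so $f_\varepsilon$ decreases by at most $\frac1\varepsilon\mathcal{L}^n(\{\zeta\ne0\})$... actually more cleanly, decreasing $u$ decreases $t=\mathcal{L}^n$, and $f_\varepsilon$ is nondecreasing, so $f_\varepsilon$ does not increase; hence the Dirichlet energy alone satisfies $\int\nabla u_{\varepsilon,R}\cdot\nabla\zeta\ge0$ for $\zeta\le0$, which is $-\Delta u_{\varepsilon,R}\le0$.

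For the Lipschitz bound (1), the key comparison is with the harmonic replacement. Fix $\Omega'\Subset\Omega_R$ and a ball $B_{2r}(x_0)\subset\Omega_R$ with controlled radius; let $h$ be the harmonic function in $B_r(x_0)$ with $h=u_{\varepsilon,R}$ on $\partial B_r(x_0)$, extended by $u_{\varepsilon,R}$ outside. Comparing $J_{\varepsilon,R}(u_{\varepsilon,R})\le J_{\varepsilon,R}(h)$ and noting $h\ge u_{\varepsilon,R}\ge0$ (maximum principle, so the positive set only grows, and the penalty increases by at most $\frac1\varepsilon\mathcal{L}^n(B_r)$) yields
\[
\int_{B_r(x_0)}|\nabla(u_{\varepsilon,R}-h)|^2\,\mathrm{d}x=\int_{B_r(x_0)}|\nabla u_{\varepsilon,R}|^2-|\nabla h|^2\,\mathrm{d}x\le\frac{C}{\varepsilon}r^n.
\]
This is the standard "almost minimizer of the Dirichlet integral" estimate; combined with the $L^\infty$ bound from Lemma \ref{lem:P-e-R,existence} and interior gradient estimates for the harmonic $h$, a Morrey/Campanato iteration gives $|\nabla u_{\varepsilon,R}|\le C\varepsilon^{-1/2}$ on $\Omega'$ with the asserted dependence. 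I expect this estimate to be the main obstacle: getting the clean $\varepsilon^{-1/2}$ scaling (as opposed to $\varepsilon^{-1}$) requires care in how the penalty term is absorbed, and one must also track that the constant depends only on $\dist(\Omega',\partial\Omega_R)$ and $\sup_D\phi$, which forces the comparison balls to have radius bounded in terms of $\dist(\Omega',\partial\Omega_R)$.

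Given (1) and (4), nondegeneracy (2) is the contrapositive of the usual argument: if $u_{\varepsilon,R}$ is not identically zero in $B_{r/2}(x_0)$, take $y$ in the positive set there, let $w$ solve $\Delta w=0$ in $B_r(x_0)\setminus\{u_{\varepsilon,R}=0\}$ with appropriate boundary data — or more simply, compare $u_{\varepsilon,R}$ with the function obtained by replacing it with its harmonic extension on $B_r(x_0)$ and using that this replacement saves at least $c\sqrt\varepsilon$ worth of energy from the average growth while costing at most $\frac1\varepsilon$-penalty on a set we control; balancing gives the stated threshold. Concretely I would test minimality against $v=u_{\varepsilon,R}$ outside $B_r(x_0)$ and $v=$ harmonic inside, use $\int_{\partial B_r}v\,\mathrm{d}\mathcal H^{n-1}$ to bound $\int_{B_r}|\nabla v|^2$ from below via the mean-value-type inequality, and conclude that a small boundary average forces $v\equiv0$, hence $u_{\varepsilon,R}\equiv0$, in $B_{r/2}(x_0)$. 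For the positive density estimate (3): if $u_{\varepsilon,R}(x_0)>0$, interior Harnack (valid where $u_{\varepsilon,R}>0$ and harmonic) together with the Lipschitz bound (1) shows the average of $u_{\varepsilon,R}$ over $\partial B_\rho(x_0)$ cannot be too small for $\rho$ a fixed fraction of $r$ — otherwise (2) would kill $u_{\varepsilon,R}$ near $x_0$ — so $u_{\varepsilon,R}\gtrsim c\sqrt\varepsilon\,\rho$ somewhere, and then the Lipschitz constant $C\varepsilon^{-1/2}$ forces $u_{\varepsilon,R}>0$ on a ball of radius $\sim c\varepsilon\rho$, giving the $\varepsilon^n$ density. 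Finally (5): use the competitor $w_0$ constructed in the proof of Lemma \ref{lem:P-e-R,existence}, which has $f_\varepsilon=0$ and $J_{\varepsilon,R}(w_0)=\int_{D'}|\nabla w_0|^2=:C_0$ bounded independently of $\varepsilon,R$; then $J_{\varepsilon,R}(u_{\varepsilon,R})\le C_0$ gives $f_\varepsilon(t)\le C_0$ where $t=\mathcal{L}^n(\{u_{\varepsilon,R}>0\}\cap\Omega_R)$, and reading off the definition of $f_\varepsilon$ in the case $t\ge\mu$ yields $\frac1\varepsilon(t-\mu)\le C_0$, i.e. $t\le\mu+C_0\varepsilon$, with $M=C_0$ depending only on $n,D,\mu,\phi,\psi$.
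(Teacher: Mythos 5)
Your items (4), (5), and (3) match the paper's argument: the paper likewise obtains subharmonicity and harmonicity from one-sided and two-sided variations, gets (5) from $J_{\varepsilon,R}(u_{\varepsilon,R})\leq J_{\varepsilon,R}(w_0)=:M$ and the explicit form of $f_\varepsilon$, and derives (3) exactly as you do, by combining (2) (which produces a point $y_0\in\partial B_{r/2}(x_0)$ with $u_{\varepsilon,R}(y_0)>C(n)\sqrt{\varepsilon}\,r$) with the Lipschitz bound of (1). The paper itself does not reprove (1) and (2); it observes that $u_{\varepsilon,R}$ restricted to $\Omega_R$ has strictly positive boundary data on $\partial D$ and hence solves the Aguilera--Alt--Caffarelli penalization problem, and then imports \cite[Corollary~3.3 and Lemma~3.4]{AC 1981} using the two-sided slope bounds $\varepsilon\leq\frac{f_\varepsilon(t_2)-f_\varepsilon(t_1)}{t_2-t_1}\leq\frac1\varepsilon$.

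The genuine gap is in your route to (1). The harmonic-replacement comparison indeed gives $\int_{B_r}|\nabla(u_{\varepsilon,R}-h)|^2\leq C\varepsilon^{-1}r^n$, but a Morrey/Campanato iteration starting from a perturbation of size $r^n$ only yields $\nabla u_{\varepsilon,R}\in\mathrm{BMO}$ (the Campanato space $\mathcal{L}^{2,n}$), not $L^\infty$: an almost-minimizer estimate with gauge $Cr^n$ is consistent with an unbounded, log-divergent gradient, and upgrading almost-minimizers of this functional to Lipschitz is a genuinely harder, much later result. The mechanism that actually produces the $C\varepsilon^{-1/2}$ Lipschitz bound in \cite{AC 1981} is a dichotomy tied to the zero set: one first proves (a version of) your item (2) in the opposite direction, namely that if $\frac1r\fint_{\partial B_r(x_0)}u_{\varepsilon,R}\,\mathrm{d}\mathcal{H}^{n-1}$ exceeds $C\sqrt{1/\varepsilon}$ then $u_{\varepsilon,R}>0$ (hence harmonic) in $B_r(x_0)$; consequently, at a point $x_0$ whose distance $d$ to $\{u_{\varepsilon,R}=0\}$ is small, one gets $u_{\varepsilon,R}(x_0)\leq C\varepsilon^{-1/2}d$ and then $|\nabla u_{\varepsilon,R}(x_0)|\leq C u_{\varepsilon,R}(x_0)/d$ by interior gradient estimates for nonnegative harmonic functions, while far from the zero set the $L^\infty$ bound and harmonicity suffice. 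So (1) should be derived \emph{after}, and by means of, the positivity criterion underlying (2), not by a Campanato iteration. A related minor circularity: your plan proves harmonicity in $\{u_{\varepsilon,R}>0\}$ first, but openness of that set (needed to admit two-sided perturbations) already requires the continuity from (1); subharmonicity can go first, harmonicity should come after continuity.
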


\begin{proof}
Our proof starts with the observation that $u_{\varepsilon,R}$ restricted to $\Omega_R$ minimizes
$$J^{e}_{\varepsilon,R}(u):=\int_{\Omega_R}|\nabla u|^2\,\mathrm{d}x+f_{\varepsilon}(\mathcal{L}^n(\{x\in\Omega_R\mid u(x)>0\}))$$
over
$$K^e_{\varepsilon,R}:=\{u\in H^1(\Omega_R)\mid u\geq0~\text{a.e.~in}~\Omega_R,~u=u_{\varepsilon,R}~\text{on}~\partial D,~u=0~\text{on}~\partial B_R\}.$$
Here we used the fact that $u_{\varepsilon,R}\geq0$ a.e. in $B_R$ from Lemma \ref{lem:P-e-R,existence}. It is clear that
$$u_{\varepsilon,R}\geq\phi\quad\text{on}~\partial D$$
in the Sobolev trace sense. Combining $\phi>0$ on $\partial D$, $u_{\varepsilon,R}$ restricted to $\Omega_R$ actually solves the penalization problem considered in \cite{AAC 1986}. Moreover, since
$$\varepsilon(t_2-t_1)\leq f_{\varepsilon}(t_2-t_1)\leq\frac{1}{\varepsilon}(t_2-t_1)\quad\text{for}~t_1\leq t_2,$$
we follow almost without change as in \cite[Corollary 3.3 and Lemma 3.4]{AC 1981} to obtain (1) and (2).

Take $x_0$ and $B_r(x_0)$ as in (3). By (2), there is $y_0\in\partial B_{\frac{r}{2}}(x_0)$ such that
$$u_{\varepsilon,R}(y_0)>C(n)\sqrt{\varepsilon}r.$$
Combining the Lipschitz continuity (1), we obtain
$$u_{\varepsilon,R}(y)>0,\quad\text{if}~|y-y_0|\leq\frac{C(n)\sqrt{\varepsilon}r}{2\text{Lip}(u_{\varepsilon,R})}=:c\varepsilon,$$
and (3) is proved.

For any nonnegative $\eta\in C_0^{\infty}(\Omega_R)$, we have
$$0\leq\limsup_{t\rightarrow0^+}\frac{1}{2t}\left(J_{\varepsilon,R}(u_{\varepsilon,R}-t\eta)-J_{\varepsilon,R}(u_{\varepsilon,R})\right)
\leq-\int_{\Omega_R}\nabla u_{\varepsilon,R}\cdot\nabla\eta\,\mathrm{d}x.$$
That is, $u_{\varepsilon,R}$ is subharmonic in $\Omega_R$. By (1), $\{x\in\Omega_R\mid u_{\varepsilon,R}(x)>0\}$ is an open set. For any $\zeta\in C_0^{\infty}(\{x\in\Omega_R\mid u_{\varepsilon,R}(x)>0\})$ and $t>0$ small, we have
$$J_{\varepsilon,R}(u_{\varepsilon,R}\pm t\zeta)\geq J_{\varepsilon,R}(u_{\varepsilon,R}).$$
This implies
$$\int_{\{x\in\Omega_R\mid u_{\varepsilon,R}(x)>0\}}\nabla u_{\varepsilon,R}\cdot\nabla\zeta\,\mathrm{d}x=0,$$
and (4) is proved.

In the proof of Lemma \ref{lem:P-e-R,existence}, we see that
$$J_{\varepsilon,R}(u_{\varepsilon,R})\leq J_{\varepsilon,R}(w_0)=\int_{D'}|\nabla w_0|^2\,\mathrm{d}x=:M.$$
Thus,
$$f_{\varepsilon}(\mathcal{L}^n(\{x\in\Omega_R\mid u_{\varepsilon,R}(x)>0\}))\leq M.$$
By the definition of $f_{\varepsilon}$, the proof is finished.
\end{proof}

We continue by showing that the free boundary $\partial\{u_{\varepsilon,R}>0\}\cap\Omega_R$ stays away from the fixed boundary $\partial D$, i.e. $u_{\varepsilon,R}>0$ in a neighborhood of $D$.

Define for $0<\delta<1$,
$$D_{\delta}=\{x\in\mathbb{R}^n\mid\dist(x, D)<\delta\}.$$
Since $R\geq R_0$, together with \eqref{R_0}, we have $D_{\delta}\subset B_R$.

\begin{lemma}\label{D-nghd}
Let $u_{\varepsilon,R}$ be a solution to problem \eqref{eq:P-e-R}. Then there exists a positive constant $\delta$ such that
$$D_{\delta}\setminus\overline{D}\subset\{x\in\Omega_R\mid u_{\varepsilon,R}(x)>0\},$$
where $\delta$ depends only on $n$, $D$, $\varepsilon$, $\inf\limits_{\partial D}\phi$.
\end{lemma}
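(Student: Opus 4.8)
The plan is to show that if the free boundary came too close to $\partial D$, then we could build a competitor that lowers $J_{\varepsilon,R}$, contradicting minimality. Fix a boundary point $z\in\partial D$ and work in a small ball $B_\rho(z)$ with $\rho<1$ so that $B_\rho(z)\subset B_R$ by \eqref{R_0}. On $\partial D$ we have $u_{\varepsilon,R}\geq\phi\geq\inf_{\partial D}\phi=:\phi_0>0$ in the trace sense, while from Lemma~\ref{lem:P-e-R,existence} we have the global bound $0\leq u_{\varepsilon,R}\leq\sup_D\phi=:\phi_1$. The idea is to compare $u_{\varepsilon,R}$ in $\Omega_R$ with the competitor obtained by replacing $u_{\varepsilon,R}$ on $B_\rho(z)\setminus\overline D$ by the harmonic function $h_\rho$ with boundary data $u_{\varepsilon,R}$ on $\partial(B_\rho(z)\setminus\overline D)$, or more simply by comparing with an explicit barrier: since $u_{\varepsilon,R}$ is subharmonic in $\Omega_R$ (Lemma~\ref{basic prop}(4)) and bounded below on $\partial D$ by $\phi_0$, a barrier argument will force $u_{\varepsilon,R}$ to be positive, hence harmonic, in a one-sided neighborhood of $\partial D$.

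The cleanest execution: let $v$ be the solution of $\Delta v=0$ in $D_\sigma\setminus\overline D$ (for a small fixed $\sigma$, using smoothness of $\partial D$ so this is a nice collar), with $v=\phi_0$ on $\partial D$ and $v=0$ on $\partial D_\sigma$. By the smoothness of $\partial D$ and Hopf/boundary regularity, $v>0$ in $D_\sigma\setminus\overline D$ and there is a definite interior sphere condition giving $v\geq c(n,D)\,\phi_0\,(\sigma-\dist(x,D))/\sigma$ near $\partial D$; in particular $v\geq \kappa>0$ on $D_{\delta}\setminus\overline D$ for a suitable $\delta=\delta(n,D,\sigma)$. Now I claim $u_{\varepsilon,R}\geq v$ in $D_\sigma\setminus\overline D$: indeed $w:=v-u_{\varepsilon,R}$ is subharmonic there (difference of harmonic $v$ and superharmonic $-u_{\varepsilon,R}$ — wait, $u_{\varepsilon,R}$ is only subharmonic, so I should instead argue the other direction). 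The correct direction is to use that $u_{\varepsilon,R}$ is \emph{super}harmonic where it is not yet pinned: more carefully, on the open set $\{u_{\varepsilon,R}>0\}\cap(D_\sigma\setminus\overline D)$ the function $u_{\varepsilon,R}$ is harmonic, and on the rest it equals $0$; since $u_{\varepsilon,R}$ is subharmonic and $v$ is harmonic with $v\le u_{\varepsilon,R}$ on $\partial D$ ($v=\phi_0\le u_{\varepsilon,R}$) and $v=0\le u_{\varepsilon,R}$ on $\partial D_\sigma$, the comparison principle for the subharmonic function $v-u_{\varepsilon,R}$... no — I need $u_{\varepsilon,R}$ superharmonic for a lower bound. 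The fix is standard: a bounded subharmonic function that vanishes on an open set but is positive nearby cannot dip below a harmonic barrier because on $\{u_{\varepsilon,R}=0\}$ we trivially have $u_{\varepsilon,R}=0$, whereas I must show $v=0$ there too; this is exactly where one uses \emph{nondegeneracy} (Lemma~\ref{basic prop}(2)) together with the quantitative lower bound $u_{\varepsilon,R}\geq\phi_0$ on $\partial D$ to rule out $\{u_{\varepsilon,R}=0\}$ meeting a thin collar.

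So the actual argument I would write is the following two-step one. Step 1 (qualitative positivity): suppose some $x_0\in\Omega_R$ with $\dist(x_0,\partial D)$ small has $u_{\varepsilon,R}(x_0)=0$; by the positive-density estimate run in reverse, or directly by nondegeneracy, there is a ball $B_r(x_0)$ with $r\sim\dist(x_0,\partial D)$ on which the spherical average of $u_{\varepsilon,R}$ is at most $C\sqrt\varepsilon\,r$, forcing $u_{\varepsilon,R}\equiv0$ on $B_{r/2}(x_0)$; iterating/chaining such balls toward $\partial D$ and using that $u_{\varepsilon,R}\ge\phi_0>0$ on $\partial D$ in the trace sense gives a contradiction once $\dist(x_0,\partial D)$ is below a threshold $\delta$ depending only on $n$, the geometry of $\partial D$, $\varepsilon$, and $\phi_0=\inf_{\partial D}\phi$. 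Step 2 (making $\delta$ explicit): quantify Step 1 by a single barrier comparison in the collar $D_\sigma\setminus\overline D$: compare $u_{\varepsilon,R}$ with the harmonic measure–type function $v$ above; on the (relatively open) positivity set $u_{\varepsilon,R}$ is harmonic and $\ge v$ on the parabolic... on the topological boundary, so $u_{\varepsilon,R}\ge v$ there, and any point where $u_{\varepsilon,R}=0$ would need $v\le 0$, impossible in the interior of the collar; hence $\{u_{\varepsilon,R}=0\}\cap(D_\delta\setminus\overline D)=\emptyset$ with $\delta$ the width on which $v\ge$ (say) $\tfrac12\phi_0\cdot$(something). The main obstacle is precisely reconciling the inequality directions — $u_{\varepsilon,R}$ is subharmonic globally but only harmonic on its positive set, so the comparison/barrier must be set up on the positivity set and the contradiction extracted at a hypothetical zero — and making sure the resulting $\delta$ depends only on the asserted quantities ($n$, $D$, $\varepsilon$, $\inf_{\partial D}\phi$) and not on $R$; the latter is automatic since the whole argument is localized in $D_\sigma\subset B_{R_0}\subset B_R$.
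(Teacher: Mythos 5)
There is a genuine gap: neither of your two steps closes, and both fail for the same structural reason — you never use the \emph{minimality} of $u_{\varepsilon,R}$ near $\partial D$, only its PDE properties (subharmonicity, harmonicity on the positivity set) and nondegeneracy, and these alone are not enough. In Step 1 you invoke nondegeneracy "in reverse": from $u_{\varepsilon,R}(x_0)=0$ you conclude that the spherical average of $u_{\varepsilon,R}$ over $\partial B_r(x_0)$ is at most $C\sqrt{\varepsilon}\,r$. Lemma \ref{basic prop}(2) is not an equivalence; at a typical free boundary point $u_{\varepsilon,R}$ vanishes at the center yet grows linearly on the sphere, so the average there is of order $r$ (indeed up to $Cr/\sqrt{\varepsilon}$ by the Lipschitz bound), not $O(\sqrt{\varepsilon}\,r)$. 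So the chain of vanishing balls never starts. In Step 2 the barrier comparison is circular: to conclude $u_{\varepsilon,R}\geq v$ on the positivity set by the maximum principle you need $u_{\varepsilon,R}\geq v$ on the \emph{entire} topological boundary of $\{u_{\varepsilon,R}>0\}\cap(D_\sigma\setminus\overline D)$, which includes the free-boundary portion where $u_{\varepsilon,R}=0<v$; ruling out that portion inside the collar is exactly the statement to be proved. Note also that the interior Lipschitz estimate of Lemma \ref{basic prop}(1) degenerates at $\partial D$, so one cannot shortcut via "trace $\geq\phi_0$ plus Lipschitz".

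The missing ingredient is an energy comparison at the fixed boundary, which is how the paper proceeds (it simply cites \cite[Lemma 6]{AAC 1986}, see also \cite[Lemma 3.4]{FMW 2006}, after observing that $u_{\varepsilon,R}|_{\Omega_R}$ solves the penalization problem of \cite{AAC 1986}, and then uses a finite cover of $\partial D$). The cited argument replaces $u_{\varepsilon,R}$ in $B_\rho(y_0)\cap\Omega_R$ by its harmonic lift $v$; since $u_{\varepsilon,R}$ is subharmonic, $v\geq u_{\varepsilon,R}$, so the positivity set only grows and the penalization increases by at most $\tfrac{1}{\varepsilon}\mathcal{L}^n(B_\rho\cap\{u_{\varepsilon,R}=0\})$. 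Minimality then bounds $\int|\nabla(v-u_{\varepsilon,R})|^2$ by that same quantity, while on $\{u_{\varepsilon,R}=0\}$ one has $v-u_{\varepsilon,R}=v\geq c(n,D)\inf_{\partial D}\phi$ by a Poisson-kernel estimate; a Poincar\'e inequality then forces $\mathcal{L}^n(B_{\rho/2}\cap\{u_{\varepsilon,R}=0\})=0$ for $\rho$ small depending only on $n$, $D$, $\varepsilon$, $\inf_{\partial D}\phi$, whence positivity. If you want a self-contained proof, this is the argument you need to reproduce; your barrier $v$ is the right object, but it must enter through the energy comparison, not through a pointwise maximum principle.
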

\begin{proof}
Recall that $u_{\varepsilon,R}$ restricted to $\Omega_R$ solves the penalization problem considered in \cite{AAC 1986}. In the proof of \cite[Lemma 6]{AAC 1986} (see also \cite[Lemma 3.4]{FMW 2006}), one has that for every $y_0\in\partial D$, there exists $\delta=\delta(n,D,\varepsilon,\inf\limits_{\partial D}\phi)>0$ such that
$$u_{\varepsilon,R}>0\quad\text{in}~B_{\delta}(y_0)\cap\Omega_R.$$
Using standard finite-cover argument, there exists a neighborhood of $\partial D$ in $\Omega_R$ where $u_{\varepsilon, R}>0$, and the conclusion follows.
\end{proof}

With Lemma \ref{basic prop} and Lemma \ref{D-nghd}, we finish this section by proving that solutions to problem $\eqref{eq:P-e-R}$ have uniformly bounded support independent of $R$.

\begin{prop}\label{uni bd}
Let $u_{\varepsilon,R}$ be a solution to problem $\eqref{eq:P-e-R}$. Then there exists a positive constant $M_0$ such that $\{x\in B_R\mid u_{\varepsilon,R}(x)>0\}$ is bounded by $B_{M_0}$. Here $M_0$ depends only on $n$, $\mu$, $\varepsilon$, $\delta$, $M$, $R_0$ and $\sup_D\phi$; $M$ and $\delta$ are given by Lemma \ref{basic prop} (5) and Lemma \ref{D-nghd}, respectively.
\end{prop}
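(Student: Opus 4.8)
The goal is to bound the positive set $\{u_{\varepsilon,R}>0\}$ by a ball $B_{M_0}$ whose radius is independent of $R$. The overall strategy is to combine the volume bound from Lemma \ref{basic prop}(5), which says $\mathcal{L}^n(\{u_{\varepsilon,R}>0\}\cap\Omega_R)\le\mu+M\varepsilon$, with the positive density estimate Lemma \ref{basic prop}(3) via a Vitali-type covering argument: if the positive set extended ``too far'', one could find many disjoint balls of a fixed size each carrying a definite chunk of positive measure, contradicting the finite volume bound. The delicate point, which the hint in the section intro flags, is that the constant $c\varepsilon^n$ in the density estimate depends on $\mathrm{dist}(\Omega',\partial\Omega_R)$, so one must work at a scale comparable to the \emph{inner} boundary $\partial D$ (or $\partial D_\delta$), not the outer sphere $\partial B_R$, to keep all constants $R$-independent. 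Concretely: fix $\rho>0$ small (depending on $\delta$ and $R_0$ only, e.g. $\rho=\delta/4$) so that for any $x_0$ with $\dist(x_0,D)\ge\delta/2$ we have $B_{2\rho}(x_0)\Subset\Omega_R$ with $\dist(B_\rho(x_0),\partial\Omega_R)\ge$ something controlled once $|x_0|\le R-1$; since by Lemma \ref{D-nghd} the positive set already contains the annulus $D_\delta\setminus\overline D$ and since $u_{\varepsilon,R}=0$ near $\partial B_R$, we only need to control points at distance $\ge\delta$ from $D$ and at distance $\ge 1$ from $\partial B_R$.

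First I would set up the relevant length scale. Choose $r_0=r_0(n,\delta)>0$ small so that $r_0\le\delta/4$ and so that the density constant in Lemma \ref{basic prop}(3) applied with $\Omega'=\{x:\delta/2<\dist(x,D)<\dist(x,\partial B_R)\}$ — or more safely with any fixed interior subdomain containing the point under consideration and staying at distance $\ge r_0$ from $\partial\Omega_R$ — yields a uniform lower bound $c\varepsilon^n$. Observe that we may always arrange $\dist(B_{r_0}(x_0),\partial D)\ge\delta/2$ and $\dist(B_{r_0}(x_0),\partial B_R)\ge 1/2$ for the points that matter, because points outside $B_{R_0}$ closer than $r_0$ to $\partial B_R$ contribute nothing (as $u_{\varepsilon,R}$ vanishes there by the nondegeneracy/harmonicity near $\partial B_R$, or simply one only needs the bound $M_0\ge R_0$ to already handle everything inside $B_{R_0}$). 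Thus the density estimate is available with constants depending only on $n,\delta$ and $\sup_D\phi$.

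Next, the covering argument. Suppose $u_{\varepsilon,R}(x_0)>0$ for some $x_0$ with $|x_0|\ge R_0$ large. The set $S:=\{x\in\Omega_R:\dist(x,D)\ge\delta,\ u_{\varepsilon,R}(x)>0\}$ then contains $x_0$. Cover $S\cap B_{|x_0|}$ — or rather, build a maximal collection of pairwise disjoint balls $\{B_{r_0}(x_j)\}_{j=1}^{N}$ with centers $x_j\in S$; by Lemma \ref{basic prop}(3) each satisfies $\mathcal{L}^n(B_{r_0}(x_j)\cap\{u_{\varepsilon,R}>0\})\ge c\varepsilon^n\mathcal{L}^n(B_{r_0})=c'\varepsilon^n r_0^n$. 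Disjointness plus the volume bound Lemma \ref{basic prop}(5) force
$$N\,c'\varepsilon^n r_0^n\le\sum_{j=1}^N\mathcal{L}^n(B_{r_0}(x_j)\cap\{u_{\varepsilon,R}>0\})\le\mathcal{L}^n(\{u_{\varepsilon,R}>0\}\cap\Omega_R)\le\mu+M\varepsilon,$$
so $N\le(\mu+M\varepsilon)/(c'\varepsilon^n r_0^n)=:N_0$, a bound depending only on $n,\mu,\varepsilon,\delta,M,\sup_D\phi$. By maximality, the balls $\{B_{2r_0}(x_j)\}$ cover $S$, so $S$ has diameter at most $2N_0\cdot 2r_0$ plus the diameter of $D_\delta$; hence $S\subset B_{M_1}$ with $M_1$ explicit in the stated parameters. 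Setting $M_0:=\max\{R_0, M_1\}+1$ (to absorb the annulus $D_\delta$ and the points within distance $\delta$ of $D$, which are automatically inside $D_\delta\subset B_{R_0}$) gives $\{u_{\varepsilon,R}>0\}\subset B_{M_0}$ independently of $R$.

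The main obstacle is genuinely the bookkeeping of which interior subdomain $\Omega'$ to use in the density estimate so that the constant $c$ does not degenerate as $R\to\infty$: one must localize near $\partial D$ (using Lemma \ref{D-nghd}) rather than anywhere near $\partial B_R$, and argue that the ``far'' region where the density constant would be bad is actually a region where $u_{\varepsilon,R}\equiv 0$ or else is already contained in the ball we are constructing. Once the scale $r_0$ and the admissible region are pinned down correctly, the Vitali/packing step and the contradiction with the finite-volume bound Lemma \ref{basic prop}(5) are routine. A minor additional check is that $u_{\varepsilon,R}$ indeed vanishes in a neighborhood of $\partial B_R$, which follows from the nondegeneracy (2) together with the harmonicity (4) and the zero boundary data, or can be sidestepped entirely by the choice $M_0\ge R_0$ since we only ever need to rule out positivity at points of modulus $\ge R_0$.
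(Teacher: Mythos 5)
Your overall strategy is the same as the paper's: localize the positive density estimate of Lemma \ref{basic prop}(3) at the fixed scale $\delta$ given by Lemma \ref{D-nghd} (so that the constant does not degenerate as $R\to\infty$), and play it against the volume bound of Lemma \ref{basic prop}(5) through a Vitali/packing argument to conclude that at most $N_0=N_0(n,\mu,\varepsilon,\delta,M,\sup_D\phi)$ pairwise disjoint balls of radius $r_0\sim\delta$ can be centered in the positive set. Up to and including the bound $N\le N_0$, your argument is correct and matches the paper.

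The gap is in the final step, where you pass from ``$S$ is covered by $N_0$ balls of radius $2r_0$'' to ``$S$ has diameter at most $2N_0\cdot 2r_0$ plus $\mathrm{diam}(D_\delta)$.'' That implication is false for a general set: a cover by finitely many balls controls the measure of $S$ but not where the balls sit. A priori $\{u_{\varepsilon,R}>0\}$ could have a connected component at distance $R/2$ from the origin, covered by one or two of your balls, and nothing in your argument excludes it; so you cannot yet conclude $S\subset B_{M_1}$ with $M_1$ independent of $R$. The missing ingredient --- which is exactly what the paper's phrase ``any point $x$ in $W$ can be connected to $D_\delta$ by a chain of at most $\mathrm{Card}(\mathcal{I})$ balls'' is invoking --- is that every component of the positive phase reaches back to $\partial D$: if $U$ is a connected component of $\{u_{\varepsilon,R}>0\}\cap\Omega_R$ with $\overline{U}\cap\partial D=\emptyset$, then $u_{\varepsilon,R}$ is harmonic in $U$ (Lemma \ref{basic prop}(4)) and vanishes on $\partial U$, hence $u_{\varepsilon,R}\equiv 0$ in $U$ by the maximum principle, a contradiction. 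Given this, join any $x_0$ with $u_{\varepsilon,R}(x_0)>0$ to $D_\delta$ by a path inside the positive set and select points along it that are pairwise $2r_0$-separated; the resulting disjoint $r_0$-balls are all centered in the positive set, so there are at most $N_0$ of them, giving $|x_0|\le R_0+\delta+CN_0r_0$. With this chain step inserted your proof closes, and it coincides with the paper's choice $M_0=R_0+\delta+5\delta\,\mathrm{Card}(\mathcal{I})$. (Your side remark that $u_{\varepsilon,R}$ vanishes near $\partial B_R$, or that ``$M_0\ge R_0$ handles everything,'' does not substitute for this: the points one must rule out are precisely those of large modulus, and that is what the connectivity argument does.)
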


\begin{proof}
Denote $W=\left\{x\in\Omega_R\mid u_{\varepsilon,R}(x)>0,~\dist(x,\partial \Omega_R)>\frac{\delta}{2}\right\}$. By Lemma \ref{D-nghd}, $W\neq\emptyset$.
By Lemma \ref{basic prop} (3), we have
$$\mathcal{L}^n\left(B_{\frac{\delta}{3}}(x)\cap\{u_{\varepsilon,R}>0\}\right)\geq c\varepsilon^n\delta^n,\quad\forall~x\in W.$$
By Vitali covering lemma, there exists a disjoint subcollection $$\left\{B_{\frac{\delta}{3}}(x_i)\right\}_{i\in\mathcal{I}}\subset\left\{B_{\frac{\delta}{3}}(x)\right\}_{x\in W}$$
such that $W\subset\bigcup_{i\in\mathcal{I}}B_{\frac{5}{3}\delta}(x_i)$, where $\mathcal{I}$ is an index set. It follows from Lemma \ref{basic prop} (5) that
\begin{equation*}
\begin{split}
\mu+M\varepsilon&\geq\mathcal{L}^n(\{x\in\Omega_R\mid u_{\varepsilon,R}(x)>0\})\\
&\geq\sum_{i\in\mathcal{I}}\mathcal{L}^n\left(B_{\frac{\delta}{3}}(x_i)\cap\{u_{\varepsilon,R}>0\}\right)\\
&\geq c\varepsilon^n\delta^n\mathrm{Card}(\mathcal{I}),
\end{split}
\end{equation*}
where $\mathrm{Card}(\mathcal{I})$ is the cardinality of $\mathcal{I}$. We thus get $\mathrm{Card}(\mathcal{I})$ is finite. Therefore, any point $x$ in $W$ can be connected to $D_{\delta}$ by a chain of at most $\mathrm{Card}(\mathcal{I})$ balls with radius $\frac{5}{3}\delta$. We finish the proof by taking
$$M_0=R_0+\delta+5\delta\mathrm{Card}(\mathcal{I}).$$
\end{proof}

\section{Solutions to problem \eqref{eq:P-e}}\label{sec:3}

In this section, we shall exploit qualitative results for the penalization problem \eqref{eq:P-e} that are analogous to those presented in Section \ref{sec:2}. In particular, we find that any solution $u_{\varepsilon}$ to problem \eqref{eq:P-e} has bounded support, which leads to the equivalence of problems \eqref{eq:P-e} and \eqref{eq:P-e-R} for large $R$. By illustrating that $u_\varepsilon$ grows linearly near the set $\partial\{u_\varepsilon>0\}\cap\Omega$, we also derive the regularity properties of the free boundary $\partial\{u_{\varepsilon}>0\}\cap\Omega$.

First, it is not hard to establish the existence of a solution to problem \eqref{eq:P-e} using the fact that solutions to problem \eqref{eq:P-e-R} have uniformly bounded support with respect to $R$ (Proposition
\ref{uni bd}).

\begin{lemma}\label{lem:P-e,existence}
There exists a solution to problem \eqref{eq:P-e}. Moreover, if $u_{\varepsilon}$ is such a solution, then
$$0\leq u_{\varepsilon}\leq\sup\limits_{D}\phi\quad\mathrm{a.e.~in}~\mathbb{R}^n.$$
\end{lemma}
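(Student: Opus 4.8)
The plan is to prove the existence statement by a routine limiting argument based on Proposition~\ref{uni bd}, and the $L^\infty$-bound by the same truncation argument used in Lemma~\ref{lem:P-e-R,existence}. First I would fix $0<\varepsilon<1$ and take a sequence $R_k\to\infty$ with $R_k\geq R_0$, and for each $k$ let $u_{\varepsilon,R_k}\in K_{R_k}$ be a minimizer of $J_{\varepsilon,R_k}$, whose existence is guaranteed by Lemma~\ref{lem:P-e-R,existence}. By Proposition~\ref{uni bd}, the positive set $\{u_{\varepsilon,R_k}>0\}$ is contained in a fixed ball $B_{M_0}$ independent of $k$; since $u_{\varepsilon,R_k}\geq 0$ a.e., this means $u_{\varepsilon,R_k}\equiv 0$ outside $B_{M_0}$, so each $u_{\varepsilon,R_k}$ actually belongs to $H_0^1(B_R)$ for every $R\geq M_0$. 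Extending by zero, we regard all $u_{\varepsilon,R_k}$ as elements of $H^1(\mathbb{R}^n)$ with support in $\overline{B}_{M_0}$.

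\textbf{Uniform bound and weak limit.} The competitor $w_0$ constructed in the proof of Lemma~\ref{lem:P-e-R,existence} lies in $K_{R_k}$ for all large $k$ and satisfies $J_{\varepsilon,R_k}(w_0)=\int_{D'}|\nabla w_0|^2\,\mathrm{d}x=:M$ uniformly, so $J_{\varepsilon,R_k}(u_{\varepsilon,R_k})\leq M$; together with $J_{\varepsilon,R_k}\geq -\mu$ this yields a uniform bound on $\|\nabla u_{\varepsilon,R_k}\|_{L^2}$, and hence (using the fixed compact support plus Poincar\'e) a uniform $H^1(\mathbb{R}^n)$ bound. Passing to a subsequence, $u_{\varepsilon,R_k}\rightharpoonup u_\varepsilon$ weakly in $H^1(\mathbb{R}^n)$ and a.e.; the a.e.\ convergence preserves $\phi\leq u_\varepsilon\leq\psi$ a.e.\ in $D$, so $u_\varepsilon\in K$. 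By weak lower semicontinuity of the Dirichlet energy and Fatou applied to the sets $\{u_{\varepsilon,R_k}>0\}$ (exactly as in Lemma~\ref{lem:P-e-R,existence}), together with the monotonicity and continuity of $f_\varepsilon$, we get $J_\varepsilon(u_\varepsilon)\leq\liminf_k J_{\varepsilon,R_k}(u_{\varepsilon,R_k})$.

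\textbf{Minimality of the limit.} It remains to show $J_\varepsilon(u_\varepsilon)\leq J_\varepsilon(v)$ for every $v\in K$. Given such a $v$, I would first reduce to $v$ with bounded support: replacing $v$ by $v\cdot\chi$ for a suitable cutoff does not increase $J_\varepsilon$ in the relevant terms once one checks the energy contribution of the cutoff region is controlled — alternatively, and more cleanly, approximate $v$ in $H^1(\mathbb{R}^n)$ by functions in $K$ with compact support and use the continuity of $J_\varepsilon$ under such approximation (the penalization term $f_\varepsilon(\mathcal{L}^n(\{v>0\}\cap\Omega))$ is the only delicate point, but $\mathcal{L}^n(\{v>0\}\cap\Omega)<\infty$ is automatic since $v\in H^1$ forces $\{|v|>0\}$ to have, at worst, no constraint — actually one truncates the support so that the measure changes by an arbitrarily small amount). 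Once $v$ has support in some $B_R$ with $R\geq M_0$, it lies in $K_R$ for every $k$ with $R_k\geq R$, so $J_{\varepsilon,R_k}(u_{\varepsilon,R_k})\leq J_{\varepsilon,R_k}(v)=J_\varepsilon(v)$; letting $k\to\infty$ gives $J_\varepsilon(u_\varepsilon)\leq J_\varepsilon(v)$. This proves $u_\varepsilon$ is a minimizer.

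\textbf{The $L^\infty$-estimate} follows verbatim from the second half of the proof of Lemma~\ref{lem:P-e-R,existence}: the truncations $u_t=u_\varepsilon-t\min\{u_\varepsilon,0\}$ and $u^t=u_\varepsilon+t\min\{\sup_D\phi-u_\varepsilon,0\}$ lie in $K$, leave the measure of the positive phase unchanged (resp.\ do not increase it), and the first-variation inequality $0\leq J_\varepsilon(u_t)-J_\varepsilon(u_\varepsilon)=t(t-2)\int_{\{u_\varepsilon<0\}}|\nabla u_\varepsilon|^2\,\mathrm{d}x$ forces $u_\varepsilon\geq 0$ a.e., while the analogous computation with $u^t$ forces $u_\varepsilon\leq\sup_D\phi$ a.e.\ in $\mathbb{R}^n$. \textbf{The main obstacle} I anticipate is the reduction step in proving minimality, namely arguing that an arbitrary competitor $v\in K$ may be taken to have compact support without affecting $J_\varepsilon$ — one must ensure the cutoff is chosen so that both the added Dirichlet energy and the change in $\mathcal{L}^n(\{v>0\}\cap\Omega)$ are negligible, which is routine but needs a short argument; everything else is a direct transcription of Section~\ref{sec:2}.
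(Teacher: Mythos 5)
Your proposal is correct and follows essentially the same route as the paper: both arguments hinge on Proposition~\ref{uni bd} to confine everything to $B_{M_0}$ and on the same truncation computation for the $L^\infty$-bound, and the compact-support reduction for competitors that you flag as the delicate point is exactly the ``without loss of generality'' step the paper also performs (and is indeed routine, since multiplying by a cutoff shrinks $\{v>0\}$ and $f_\varepsilon$ is nondecreasing). The only difference is cosmetic: the paper skips your weak-limit/lower-semicontinuity step by observing that the solution of $(P_{\varepsilon,k_0})$ for one fixed $k_0>M_0$ is already a minimizer over all of $K$.
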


\begin{proof}
We set
$$\omega=\inf\{J_{\varepsilon}(u)\mid u\in K\}.$$
Then $-\mu\leq\omega<\infty$ is finite. This follows by the same method as in Lemma \ref{lem:P-e-R,existence}. Let $\{u_k\}\subset K$ be a minimizing sequence. Without loss of generality, we may assume spt$(u_{k})\subset B_k$ for sufficiently large $k$. Moreover, we may also assume $u_k$ is a solution to problem $(P_{\varepsilon,k})$. Indeed, if $u_{\varepsilon,k}$ is a solution to problem $(P_{\varepsilon,k})$, by assuming $u_{\varepsilon,k}=0$ in $\mathbb{R}^n\setminus\overline{B}_k$, then
\begin{equation*}
\begin{split}
J_{\varepsilon}(u_{\varepsilon,k})&=\int_{\mathbb{R}^n}|\nabla u_{\varepsilon,k}|^2\,\mathrm{d}x+f_{\varepsilon}(\mathcal{L}^n(\{u_{\varepsilon,k}>0\}\cap\Omega))\\
&=\int_{B_k}|\nabla u_{\varepsilon,k}|^2\,\mathrm{d}x+f_{\varepsilon}(\mathcal{L}^n(\{u_{\varepsilon,k}>0\}\cap\Omega_k))\\
&=J_{\varepsilon,k}(u_{\varepsilon,k})\leq J_{\varepsilon,k}(u_k)=J_{\varepsilon}(u_k).
\end{split}
\end{equation*}
By Proposition \ref{uni bd}, we have $$\text{spt}(u_k)\subset B_{M_0}.$$
Fix some sufficiently large $k_0>M_0$ and take a corresponding solution $u_{\varepsilon}$ to problem $(P_{\varepsilon,k_0})$. Assume that $u_\varepsilon=0$ in $\mathbb{R}^n\setminus\overline{B}_{k_0}$. We thus get
$$J_{\varepsilon}(u_{\varepsilon})=J_{\varepsilon,k_0}(u_\varepsilon)\leq J_{\varepsilon,k_0}(u_k)\leq J_{\varepsilon}(u_k),\quad\forall~k\geq1.$$
Letting $k\rightarrow\infty$, we obtain that $u_{\varepsilon}$ is a solution to problem \eqref{eq:P-e}.

Analysis similar to that in the proof of Lemma \ref{lem:P-e-R,existence} shows the $\mathcal{L}^\infty$-estimate on $u_{\varepsilon}$ in $\mathbb{R}^n$.
This finishes the proof.
\end{proof}

Proceeding as in Section \ref{sec:2} with minor modifications, we can easily obtain certain fine properties for solutions to problem \eqref{eq:P-e}. We state them below for complements.

\begin{lemma}\label{prop-2}
Let $u_{\varepsilon}$ be a solution to problem \eqref{eq:P-e}. Then $u_{\varepsilon}$ satisfies the following properties:
\begin{enumerate}
\item [(1)] $u_{\varepsilon}\in C_{\mathrm{loc}}^{1,1}(D)\cap C_{\mathrm{loc}}^{0,1}(\Omega)$. Moreover, for any $\Omega'\Subset\Omega$, there exists a positive constant $C$ such that
$$||\nabla u_{\varepsilon}||_{L^{\infty}(\Omega')}\leq\frac{C}{\sqrt{\varepsilon}},$$
where $C$ depends only on $n$, $\mathrm{dist}(\Omega',\partial\Omega)$ and $\sup\limits_D\phi$.
\item [(2)] There exists a constant $C=C(n)$ such that for $B_r(x_0)\subset\Omega$, if
$$\frac{1}{r\mathcal{H}^{n-1}(\partial B_r(x_0))}\int_{\partial B_r(x_0)}u_{\varepsilon}\,\mathrm{d}\mathcal{H}^{n-1}\leq C\sqrt{\varepsilon},$$
then $u_{\varepsilon}\equiv0$ in $B_{\frac{r}{2}}(x_0)$.
\item [(3)] For $\Omega'\Subset\Omega$, there exists a positive constant $c$ such that for $B_r(x_0)\subset\Omega'$ with $u_{\varepsilon}(x_0)>0$,
$$\frac{\mathcal{L}^n(B_r(x_0)\cap\{u_{\varepsilon}>0\})}{\mathcal{L}^n(B_r(x_0))}\geq c\varepsilon^n,$$
where $c$ depends only on $n$, $\mathrm{dist}(\Omega',\partial\Omega)$ and $\sup_D\phi$.
\item [(4)] $u_{\varepsilon}$ is harmonic in $\{x\in\Omega\mid u_{\varepsilon}(x)>0\}$ and subharmonic in $\Omega$.
\item [(5)] $\mathcal{L}^n(\{x\in\Omega\mid u_{\varepsilon}(x)>0\})\leq\mu+M\varepsilon$, where
$M$ is given by Lemma \ref{basic prop} (5).
\end{enumerate}
\end{lemma}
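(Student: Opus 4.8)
The plan is to transfer each of the five properties from the $R$-problems treated in Section~\ref{sec:2} to the whole-space problem \eqref{eq:P-e}, using the boundedness of supports established in Proposition~\ref{uni bd} and the equivalence of \eqref{eq:P-e} and \eqref{eq:P-e-R} it yields. Fix a solution $u_\varepsilon$ to \eqref{eq:P-e}; by Lemma~\ref{lem:P-e,existence} it is bounded, and by (the proof of) Lemma~\ref{lem:P-e,existence} one may take $R$ large enough that $\text{spt}(u_\varepsilon)\Subset B_R$, and then $u_\varepsilon$ is a solution to \eqref{eq:P-e-R} for that $R$; conversely, any comparison function in $K$ with support in $B_R$ lies in $K_R$ with the same energy, so minimality is preserved. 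First I would record this equivalence carefully, since it is what makes all the local statements in $\Omega$ immediate: for (1) the Lipschitz bound, (2) nondegeneracy, (3) positive density, (4) harmonicity and subharmonicity, and (5) the volume bound, one simply quotes the corresponding item of Lemma~\ref{basic prop} applied on $\Omega_R=B_R\setminus\overline D$ and notes that the constants there depend only on $n$, $\mathrm{dist}(\Omega',\partial\Omega_R)$, $\sup_D\phi$ (or on $n$, $D$, $\mu$, $\phi$, $\psi$ for (5)); since $\Omega'\Subset\Omega$ is at positive distance from $\partial B_R$ once $R$ is large, these become the asserted dependencies on $\mathrm{dist}(\Omega',\partial\Omega)$ and $\sup_D\phi$ alone, with $M$ literally the constant from Lemma~\ref{basic prop}~(5).

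The only genuinely new item is the $C^{1,1}_{\mathrm{loc}}(D)$ regularity in (1), which has no counterpart in Section~\ref{sec:2} because there the analysis was carried out in the exterior domain $\Omega_R$. Here I would argue directly inside $D$: $u_\varepsilon$ restricted to $D$ minimizes $\int_D|\nabla u|^2$ over $\{u\in H^1(D):\phi\le u\le\psi \text{ a.e. in }D,\ u=u_\varepsilon \text{ on }\partial D\}$, because the penalization term $f_\varepsilon(\mathcal L^n(\{u>0\}\cap\Omega))$ does not see values of $u$ in $D$ and any competitor of this form extends by $u_\varepsilon$ on $\Omega$ to an element of $K$. Thus $u_\varepsilon|_D$ is the solution of a classical double obstacle problem with obstacles $\phi,\psi\in C^2(\overline D)$, and the optimal interior regularity $C^{1,1}_{\mathrm{loc}}(D)$ is exactly the result of Chipot \cite{C 1979} and Caffarelli--Kinderlehrer \cite{CK 1980} cited in the introduction; I would invoke it as a black box. (One checks the hypotheses: $\phi\le\psi$ in $D$ and $\phi,\psi\in C^{2}(\overline D)$ make the double obstacle problem nondegenerate with $C^{1,1}$ solutions.)

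For (3) specifically, I would repeat verbatim the short chain used in the proof of Lemma~\ref{basic prop}~(3): nondegeneracy (2) gives a point $y_0\in\partial B_{r/2}(x_0)$ with $u_\varepsilon(y_0)>C(n)\sqrt\varepsilon\,r$, and the Lipschitz bound (1) then forces $u_\varepsilon>0$ on a ball around $y_0$ of radius $c\varepsilon$, whence the stated density. For (4) the first-variation computation is identical to the one in Lemma~\ref{basic prop}: testing $J_\varepsilon(u_\varepsilon-t\eta)\ge J_\varepsilon(u_\varepsilon)$ with $\eta\ge0$, $\eta\in C_0^\infty(\Omega)$, gives subharmonicity in $\Omega$ since $f_\varepsilon$ is nondecreasing, and testing with $\pm t\zeta$, $\zeta\in C_0^\infty(\{u_\varepsilon>0\}\cap\Omega)$ — an open set by (1) — gives harmonicity there because the penalization term is unchanged for small $t$. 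For (5), either quote Lemma~\ref{basic prop}~(5) through the equivalence, or re-run its one-line proof: $J_\varepsilon(u_\varepsilon)\le J_\varepsilon(w_0)=M$ with the same $w_0$ as in Lemma~\ref{lem:P-e-R,existence}, so $f_\varepsilon(\mathcal L^n(\{u_\varepsilon>0\}\cap\Omega))\le M$, and the definition of $f_\varepsilon$ gives $\mathcal L^n(\{u_\varepsilon>0\}\cap\Omega)\le\mu+M\varepsilon$.

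The main obstacle, such as it is, is purely bookkeeping: making sure that when one passes to \eqref{eq:P-e-R} the relevant $R$ can be chosen uniformly for all competitors entering the first-variation arguments, and that the constants' dependence on $\mathrm{dist}(\Omega',\partial\Omega_R)$ is correctly replaced by $\mathrm{dist}(\Omega',\partial\Omega)$ once $\Omega'\Subset\Omega$ is fixed and $R$ is enlarged. I do not expect any analytic difficulty beyond citing \cite{C 1979,CK 1980} for the interior $C^{1,1}$ bound; everything else is a transcription of Section~\ref{sec:2}, which is why the paper says the results follow "with minor modifications."
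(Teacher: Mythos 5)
Your treatment of the individual estimates is the same as the paper's: the $C^{1,1}_{\mathrm{loc}}(D)$ bound comes from restricting $u_\varepsilon$ to $D$, recognizing a classical double obstacle problem with $C^2$ obstacles, and citing \cite{C 1979,CK 1980}; the exterior properties come from recognizing that $u_\varepsilon|_\Omega$ solves the Aguilera--Alt--Caffarelli penalized problem and transcribing the arguments of Lemma \ref{basic prop}. That part is fine.

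The structural problem is your opening reduction. You propose to fix $R$ with $\mathrm{spt}(u_\varepsilon)\Subset B_R$ and then quote Lemma \ref{basic prop} for the resulting solution of \eqref{eq:P-e-R}. But Lemma \ref{prop-2} is stated for an \emph{arbitrary} solution of \eqref{eq:P-e}, and at this point in the paper it is not known that an arbitrary solution has bounded support: that is exactly Proposition \ref{prop:u-e bdd spt}, whose proof (via the Vitali covering argument of Proposition \ref{uni bd}) uses parts (3) and (5) of the very lemma you are proving. The proof of Lemma \ref{lem:P-e,existence} only produces \emph{one} solution with support in $B_{M_0}$ (the one obtained as a solution of $(P_{\varepsilon,k_0})$); it gives no control on the support of other minimizers. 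So, as written, your route through the equivalence with \eqref{eq:P-e-R} is circular. The repair is what the paper actually does, and what you yourself already do for items (4) and (5): work directly in the unbounded domain $\Omega$. The functional $J^e_\varepsilon$ is minimized by $u_\varepsilon|_\Omega$ over $K^e_\varepsilon$ without any support restriction; the Alt--Caffarelli arguments behind (1)--(3) are purely local (they only involve competitors that differ from $u_\varepsilon$ on a compact subset of $\Omega$), the first-variation computation for (4) uses compactly supported test functions, and (5) follows from the single energy comparison $J_\varepsilon(u_\varepsilon)\le J_\varepsilon(w_0)=M$. None of these needs $\mathrm{spt}(u_\varepsilon)$ to be bounded, and the equivalence with \eqref{eq:P-e-R} should be stated only afterwards, as the paper's corollary to Proposition \ref{prop:u-e bdd spt}.
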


\begin{proof}
It is easy to check that $u_{\varepsilon}$ restricted to $D$ solves the double obstacle problem in $D$ with $\phi$, $\psi$ as obstacles and boundary data $u_{\varepsilon}$ on $\partial D$. Namely, $u_\varepsilon$ minimizes the energy functional $\int_D|\nabla u|^2\,\mathrm{d}x$
over
$$\{u\in H^1(D)\mid\phi\leq u\leq\psi~\text{a.e.~in}~D,~u=u_\varepsilon~\text{on}~\partial D\}.$$
The optimal regularity theorem in \cite{CK 1980,C 1979} implies that $u_{\varepsilon}\in C_{\mathrm{loc}}^{1,1}(D)$.

Also, using the fact from Lemma \ref{lem:P-e,existence} that $u_{\varepsilon}\geq0$ a.e. in $\Omega$ again, $u_{\varepsilon}$ also minimizes
$$J^e_{\varepsilon}(u):=\int_{\Omega}|\nabla u|^2\,\mathrm{d}x+f_{\varepsilon}(\mathcal{L}^n(\{x\in\Omega\mid u(x)>0\}))$$
over
$$K^e_{\varepsilon}:=\{u\in H^1(\Omega)\mid u\geq0~\text{a.e.~in}~\Omega,~u=u_{\varepsilon}~\text{on}~\partial\Omega\}.$$
Consequently, $u_{\varepsilon}$ solves the penalization problem in $\Omega$ studied in \cite{AAC 1986}. The rest of the proof runs as in Lemma \ref{basic prop}.
\end{proof}

Likewise, one can see that the following assertions hold.

\begin{lemma}\label{D-nghd2}
Let $u_\varepsilon$ be a solution to problem \eqref{eq:P-e}. Then
$$D_{\delta}\setminus\overline{D}\subset\{x\in\Omega\mid u_{\varepsilon}(x)>0\},$$
where $\delta$ is given by Lemma \ref{D-nghd}.
\end{lemma}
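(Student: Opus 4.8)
The plan is to transcribe the argument of Lemma \ref{D-nghd} to the whole-space penalization problem. The essential input, already extracted in the proof of Lemma \ref{prop-2}, is that $u_\varepsilon$ restricted to $\Omega$ minimizes
$$J^e_\varepsilon(u)=\int_\Omega|\nabla u|^2\,\mathrm{d}x+f_\varepsilon(\mathcal{L}^n(\{x\in\Omega\mid u(x)>0\}))$$
over $K^e_\varepsilon$, and that its trace on $\partial D$ satisfies $u_\varepsilon\ge\phi\ge\inf_{\partial D}\phi>0$; hence $u_\varepsilon|_\Omega$ solves exactly the penalization problem of \cite{AAC 1986}, merely posed on the unbounded domain $\Omega$ instead of on $\Omega_R$.

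First I would invoke the local positivity statement from the proof of \cite[Lemma 6]{AAC 1986} (see also \cite[Lemma 3.4]{FMW 2006}): for each $y_0\in\partial D$ there is $\delta=\delta(n,D,\varepsilon,\inf_{\partial D}\phi)>0$ with $u_\varepsilon>0$ in $B_\delta(y_0)\cap\Omega$. The constant obtained there is built from barriers in small balls centered on $\partial D$, together with the bound that enlarging the positivity set costs at most $\varepsilon^{-1}$ per unit volume; it therefore depends only on $n$, $D$, $\varepsilon$ and the lower bound of the boundary data, and not on the size of the ambient domain. In particular it is the same $\delta$ as in Lemma \ref{D-nghd}. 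Covering the compact set $\partial D$ by finitely many such balls produces a neighborhood of $\partial D$ inside $\Omega$ on which $u_\varepsilon>0$, and after possibly shrinking $\delta$ this neighborhood contains $D_\delta\setminus\overline{D}$, which is the assertion.

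I do not anticipate a genuine obstacle; the only point needing a remark is that the construction in \cite{AAC 1986} is purely local near $\partial D$, so it is insensitive to the replacement of $\Omega_R$ by $\Omega$. This is immediate once one records that $u_\varepsilon$ is subharmonic in $\Omega$ (Lemma \ref{prop-2}(4)), is nonnegative (Lemma \ref{lem:P-e,existence}), and has trace bounded below by $\inf_{\partial D}\phi$ on $\partial D$. Alternatively, once one knows that a solution $u_\varepsilon$ of \eqref{eq:P-e} has bounded support, say $\mathrm{spt}(u_\varepsilon)\subset B_{M_0}$, one checks directly --- extending competitors in $K_R$ to $K$ by zero and comparing energies --- that $u_\varepsilon$ is a solution of \eqref{eq:P-e-R} for every $R>M_0$, whence Lemma \ref{D-nghd} applies without change.
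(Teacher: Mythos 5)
Your main argument is exactly the paper's (implicit) proof: the paper offers no separate argument for this lemma beyond ``likewise,'' relying precisely on the observation from Lemma \ref{prop-2} that $u_\varepsilon|_\Omega$ solves the penalization problem of \cite{AAC 1986}, so that the local barrier construction of \cite[Lemma 6]{AAC 1986} and a finite cover of $\partial D$ apply verbatim, with the same $\delta$ as in Lemma \ref{D-nghd}. One caution on your proposed alternative: in the paper's logical order the bounded-support statement (Proposition \ref{prop:u-e bdd spt}) is itself derived by repeating the proof of Proposition \ref{uni bd}, which uses the present lemma to guarantee $W\neq\emptyset$ and to chain balls back to $D_\delta$; so deducing the lemma from bounded support would be circular unless you first establish bounded support by an independent argument.
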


By virtue of the above lemmas, we can argue as in the proof of Proposition \ref{uni bd} to verify that any solution to problem \eqref{eq:P-e} has bounded support. As an immediate consequence, we get the equivalence of problems \eqref{eq:P-e} and \eqref{eq:P-e-R} for large $R$.

\begin{prop}\label{prop:u-e bdd spt}
Let $u_\varepsilon$ be a solution to problem \eqref{eq:P-e}. Then $\{x\in\mathbb{R}^n\mid u_{\varepsilon}(x)>0\}$ is bounded by $B_{M_0}$, where $M_0$ is given by Proposition \ref{uni bd}.
\end{prop}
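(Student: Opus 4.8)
The plan is to mimic the proof of Proposition \ref{uni bd} essentially verbatim, replacing the role of the truncated domain $B_R$ and problem \eqref{eq:P-e-R} by $\mathbb{R}^n$ and problem \eqref{eq:P-e}. The key point is that all the ingredients used in Proposition \ref{uni bd} have now been reestablished in the whole-space setting: the positive density estimate (Lemma \ref{prop-2}(3)), the volume bound on the positive phase (Lemma \ref{prop-2}(5)), and the fact that $D_\delta\setminus\overline{D}$ lies in the positivity set (Lemma \ref{D-nghd2}). Since these hold with constants \emph{independent of} $R$ (indeed Proposition \ref{uni bd} already produced $M_0$ depending only on $n$, $\mu$, $\varepsilon$, $\delta$, $M$, $R_0$, $\sup_D\phi$), the resulting bound $M_0$ for problem \eqref{eq:P-e} is literally the same constant.

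Concretely, I would set $W=\{x\in\Omega\mid u_\varepsilon(x)>0,\ \dist(x,\partial D)>\tfrac{\delta}{2}\}$, which is nonempty by Lemma \ref{D-nghd2}. For each $x\in W$ we have $B_{\delta/3}(x)\Subset\Omega$, so Lemma \ref{prop-2}(3) gives $\mathcal{L}^n(B_{\delta/3}(x)\cap\{u_\varepsilon>0\})\geq c\varepsilon^n\delta^n$ with $c$ uniform. Applying the Vitali covering lemma to $\{B_{\delta/3}(x)\}_{x\in W}$ yields a disjoint subfamily $\{B_{\delta/3}(x_i)\}_{i\in\mathcal{I}}$ with $W\subset\bigcup_{i\in\mathcal{I}}B_{5\delta/3}(x_i)$. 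Summing the density estimates over the disjoint balls and using Lemma \ref{prop-2}(5),
$$\mu+M\varepsilon\geq\mathcal{L}^n(\{x\in\Omega\mid u_\varepsilon(x)>0\})\geq\sum_{i\in\mathcal{I}}\mathcal{L}^n\!\left(B_{\delta/3}(x_i)\cap\{u_\varepsilon>0\}\right)\geq c\varepsilon^n\delta^n\,\mathrm{Card}(\mathcal{I}),$$
so $\mathrm{Card}(\mathcal{I})$ is finite and bounded by the same quantity as before. Then any point of $W$ is joined to $D_\delta$ by a chain of at most $\mathrm{Card}(\mathcal{I})$ balls of radius $\tfrac{5}{3}\delta$, whence $W$—and therefore, adding back the collar $D_\delta\setminus\overline D$ and recalling $D\subset B_{R_0}$, the whole set $\{u_\varepsilon>0\}$—is contained in $B_{M_0}$ with $M_0=R_0+\delta+5\delta\,\mathrm{Card}(\mathcal{I})$, exactly the constant from Proposition \ref{uni bd}.

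The only genuinely new point to check—and the one I would state explicitly—is that Proposition \ref{uni bd} applies with $D_\delta\subset B_R$ replaced by $D_\delta\subset\mathbb{R}^n$; but since $B_{\delta/3}(x)\Subset\Omega$ for $x\in W$ does not involve the outer boundary at all, and the chaining only reaches inward toward $D_\delta$, nothing is lost. There is no real obstacle here: the statement is an almost immediate transcription of Proposition \ref{uni bd} once Lemmas \ref{prop-2} and \ref{D-nghd2} are in hand, which is presumably why the paper phrases it as following "by virtue of the above lemmas." Accordingly I would keep the proof to a couple of sentences, pointing to the argument of Proposition \ref{uni bd} rather than repeating the Vitali computation in full.
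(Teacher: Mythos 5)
Your proposal is correct and coincides with the paper's intent: the paper gives no separate argument for this proposition, stating only that one argues as in Proposition \ref{uni bd} using Lemmas \ref{prop-2} and \ref{D-nghd2}, which is precisely the transcription you carry out. The one point you flag explicitly (that the constants in the density estimate depend only on the distance to $\partial D$, not on any outer boundary) is the right thing to check, and it holds.
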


\begin{corollary}
For every $R>M_0$, $u_{\varepsilon}$ is a solution to problem \eqref{eq:P-e} if and only if $u_{\varepsilon}$ is a solution to problem \eqref{eq:P-e-R}.
\end{corollary}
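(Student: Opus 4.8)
The plan is to deduce the corollary directly from Proposition \ref{prop:u-e bdd spt} (for problem \eqref{eq:P-e}) together with the proof of Proposition \ref{uni bd} (for problem \eqref{eq:P-e-R}), by observing that both problems, once we know their solutions are supported in $B_{M_0}$, reduce to the \emph{same} minimization over functions supported in $B_{M_0}$. The key point is that for $R>M_0$, a function in $H_0^1(B_R)$ supported in $B_{M_0}$ can equally be regarded as an element of $H^1(\mathbb{R}^n)$ supported in $B_{M_0}$ (and vice versa), and under this identification $J_{\varepsilon,R}(u)=J_\varepsilon(u)$: the gradient integrals agree because $\nabla u=0$ outside $B_{M_0}\Subset B_R$, and the penalization terms agree because $\{u>0\}\cap\Omega=\{u>0\}\cap\Omega_R$ since $\Omega\setminus\Omega_R\subset\mathbb{R}^n\setminus B_R$ contributes nothing.

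First I would prove the ``only if'' direction. Let $u_\varepsilon$ solve \eqref{eq:P-e}; by Proposition \ref{prop:u-e bdd spt}, $\{u_\varepsilon>0\}\subset B_{M_0}\Subset B_R$, so $u_\varepsilon\in K_R$ (it lies in $H_0^1(B_R)$ and satisfies the obstacle constraints in $D$). For any competitor $v\in K_R$, extend $v$ by zero to all of $\mathbb{R}^n$; then $v\in K$ and $J_\varepsilon(v)=J_{\varepsilon,R}(v)$. Since $u_\varepsilon$ minimizes $J_\varepsilon$ over $K$, we get $J_{\varepsilon,R}(u_\varepsilon)=J_\varepsilon(u_\varepsilon)\le J_\varepsilon(v)=J_{\varepsilon,R}(v)$, so $u_\varepsilon$ minimizes $J_{\varepsilon,R}$ over $K_R$.

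For the ``if'' direction, let $u_\varepsilon$ solve \eqref{eq:P-e-R} with $R>M_0$. Re-examining the proof of Proposition \ref{uni bd}: the constant $M_0$ there was constructed to depend only on $n,\mu,\varepsilon,\delta,M,R_0,\sup_D\phi$ and \emph{not} on $R$ (for $R\ge R_0$), and the argument shows $\{u_\varepsilon>0\}\cap\Omega_R$ is contained in $B_{M_0}$; combined with Lemma \ref{lem:P-e-R,existence} ($0\le u_\varepsilon\le\sup_D\phi$) and the fact that $u_\varepsilon=0$ on $\partial B_R$ with $u_\varepsilon$ subharmonic in $\Omega_R$, we conclude $\{u_\varepsilon>0\}\subset B_{M_0}$, hence $u_\varepsilon$ extended by zero lies in $K$ with $J_\varepsilon(u_\varepsilon)=J_{\varepsilon,R}(u_\varepsilon)$. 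Given any $w\in K$, by Proposition \ref{prop:u-e bdd spt} applied to \emph{a} solution of \eqref{eq:P-e} — or more directly, by truncating: one checks $w$ can be modified outside $B_R$ without increasing $J_\varepsilon$, since replacing $w$ by the solution of the exterior penalization problem with $w$'s boundary data only lowers the energy and, by the uniform support bound, such a modification is supported in $B_{M_0}\Subset B_R$ — we may assume $\mathrm{spt}(w)\subset B_R$, so $w\in K_R$ and $J_\varepsilon(w)=J_{\varepsilon,R}(w)\ge J_{\varepsilon,R}(u_\varepsilon)=J_\varepsilon(u_\varepsilon)$. Thus $u_\varepsilon$ solves \eqref{eq:P-e}.

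The main obstacle is the reduction ``we may assume $\mathrm{spt}(w)\subset B_R$'' in the ``if'' direction: unlike the ``only if'' direction, an arbitrary competitor $w\in K$ need not have support in $B_R$, so one must show the infimum of $J_\varepsilon$ over $K$ equals the infimum over $\{w\in K:\mathrm{spt}(w)\subset B_R\}$. The cleanest route, which I would take, is to invoke Lemma \ref{lem:P-e,existence} to pick an actual solution $\tilde u_\varepsilon$ of \eqref{eq:P-e}, note $\mathrm{spt}(\tilde u_\varepsilon)\subset B_{M_0}\Subset B_R$ by Proposition \ref{prop:u-e bdd spt}, so $\tilde u_\varepsilon\in K_R$, and then compare: $J_{\varepsilon,R}(u_\varepsilon)\le J_{\varepsilon,R}(\tilde u_\varepsilon)=J_\varepsilon(\tilde u_\varepsilon)\le J_\varepsilon(w)$ for every $w\in K$; since also $J_\varepsilon(u_\varepsilon)=J_{\varepsilon,R}(u_\varepsilon)$, we get $J_\varepsilon(u_\varepsilon)\le J_\varepsilon(w)$ for all $w\in K$, i.e. $u_\varepsilon$ solves \eqref{eq:P-e}. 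This sidesteps the truncation entirely and is the argument I would present.
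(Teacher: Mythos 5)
Your proof is correct and takes essentially the approach the paper intends: the corollary is stated there as an immediate consequence of Propositions \ref{uni bd} and \ref{prop:u-e bdd spt}, and the comparison chain you use in the ``if'' direction (passing through an actual solution $\tilde u_\varepsilon$ of \eqref{eq:P-e}, which lies in $K_R$ by the uniform support bound) is precisely the mechanism already used in the proof of Lemma \ref{lem:P-e,existence}. Your choice to sidestep the truncation of a general competitor $w\in K$ by comparing against $\tilde u_\varepsilon$ is the right move and closes the only nontrivial point.
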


Recall that the solution $u_\varepsilon$ to problem \eqref{eq:P-e} is nonnegative in $\Omega$ and harmonic in $\{u_\varepsilon>0\}\cap\Omega$, and with linear growth near the free boundary $\partial\{u_\varepsilon>0\}\cap\Omega$. Here the linear growth comes from Lipschitz regularity and nondegeneracy (Lemma \ref{prop-2} (1) and (2)). In addition, note that $u_\varepsilon$ restricted to $\Omega$ solves the penalization problem studied in \cite{AAC 1986}. Therefore, we are allowed to apply the corresponding results in \cite{AAC 1986,AC 1981} to $u_\varepsilon$ to obtain the representation theorem, which contains information on the regularity of the free boundary.  To sum up, we obtain the following properties of the free boundary $\partial\{u_\varepsilon>0\}\cap\Omega$.

\begin{theorem}\label{properties}
Let $u_\varepsilon$ be a solution to problem \eqref{eq:P-e}. Then
\begin{enumerate}
  \item [(1)] $\mathcal{H}^{n-1}(\partial\{u_{\varepsilon}>0\}\cap\Omega')<\infty$ for $\Omega'\Subset\Omega$.
  \item [(2)] There exists a Borel function $q_{u_{\varepsilon}}$ such that in the sense of distributions
$$\Delta u_{\varepsilon}=q_{u_{\varepsilon}}\mathcal{H}^{n-1}\lfloor\partial\{u_{\varepsilon}>0\}.$$
That is, for $\eta\in C_0^{\infty}(\Omega)$, there holds
$$-\int_{\Omega}\nabla u_{\varepsilon}\cdot\nabla\eta\,\mathrm{d}x=\int_{\partial\{u_{\varepsilon}>0\}\cap\Omega} q_{u_{\varepsilon}}\eta\,\mathrm{d}\mathcal{H}^{n-1}.$$
Moreover, there exists a positive constant $\lambda_{\varepsilon}$ such that
$$q_{u_{\varepsilon}}=\lambda_{\varepsilon}\quad\mathcal{H}^{n-1}\text{-}\mathrm{a.e.~on}~\partial\{u_{\varepsilon}>0\}\cap\Omega.$$
  \item [(3)] For $\Omega'\Subset\Omega$, there exist positive constants $c$ and $C$ such that for $B_r(x_0)\Subset\Omega'$ with $x_0\in\partial\{u_\varepsilon>0\}$,
  $$cr^{n-1}\leq\mathcal{H}^{n-1}(\partial\{u_\varepsilon>0\}\cap B_r(x_0))\leq Cr^{n-1},$$
  where $c$ and $C$ depend only on $n$, $u_\varepsilon$, $\Omega'$ and $\Omega$.
  \item [(4)] For $\mathcal{H}^{n-1}$-almost every $x_0\in\partial_{\mathrm{red}}\{u_\varepsilon>0\}$, there holds
  $$u_\varepsilon(x_0+x)=q_{u_\varepsilon}(x_0)\max\{-x\cdot\nu(x_0),0\}+o(|x|),\quad\mathrm{as}~|x|\rightarrow0,$$
  where $\nu(x_0)$ is the outer normal vector to the reduced boundary of $\{u_\varepsilon>0\}$ at $x_0$.
  \item [(5)] $\partial\{u_{\varepsilon}>0\}\cap\Omega$ is smooth, except for a closed set of $\mathcal{H}^{n-1}$-measure zero.
\end{enumerate}
\end{theorem}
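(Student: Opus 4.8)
The plan is to recognize that Theorem \ref{properties} is essentially a restatement of the free boundary theory developed by Alt--Caffarelli \cite{AC 1981} and Aguilera--Alt--Caffarelli \cite{AAC 1986} for the one-phase penalized Bernoulli-type problem, transplanted to our setting via the observation (already established in Lemma \ref{prop-2}) that $u_\varepsilon$ restricted to $\Omega$ minimizes $J^e_\varepsilon$ over $K^e_\varepsilon$, hence solves precisely the penalization problem of \cite{AAC 1986}. So the main work is bookkeeping: checking that the hypotheses needed to invoke those references hold here, and that the relevant statements are local (so the unbounded domain $\Omega$ and the fixed boundary $\partial D$ cause no trouble, since everything is stated for $\Omega'\Subset\Omega$). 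First I would record that $u_\varepsilon\ge 0$ in $\Omega$, $u_\varepsilon$ is subharmonic in $\Omega$ and harmonic in $\{u_\varepsilon>0\}\cap\Omega$ (Lemma \ref{prop-2} (4)), that $u_\varepsilon$ is locally Lipschitz (Lemma \ref{prop-2} (1)) and nondegenerate in the averaged sense (Lemma \ref{prop-2} (2)); together the last two give the two-sided linear growth $c\,\mathrm{dist}(x,\partial\{u_\varepsilon>0\})\le u_\varepsilon(x)\le C\,\mathrm{dist}(x,\partial\{u_\varepsilon>0\})$ near the free boundary inside any $\Omega'\Subset\Omega$, which is the key structural input.

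For part (1), the finite perimeter bound $\mathcal{H}^{n-1}(\partial\{u_\varepsilon>0\}\cap\Omega')<\infty$: I would follow \cite[Lemma 4.2 and Theorem 4.3]{AC 1981}. The mechanism is that subharmonicity plus nondegeneracy forces $|\nabla u_\varepsilon|$ to behave like a surface measure; more precisely, one shows $\Delta u_\varepsilon$ is a nonnegative Radon measure (from subharmonicity) supported on $\partial\{u_\varepsilon>0\}$ (since $u_\varepsilon$ is harmonic where it is positive and identically zero on the interior of the zero set), and then nondegeneracy upgrades this to the Ahlfors-regular bound $c r^{n-1}\le \Delta u_\varepsilon(B_r(x_0))\le C r^{n-1}$ for $x_0$ on the free boundary, which via the measure-theoretic characterization of sets of finite perimeter yields (1). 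Part (2) is then the Riesz representation of the measure $\Delta u_\varepsilon$: writing $\Delta u_\varepsilon = q_{u_\varepsilon}\,\mathcal{H}^{n-1}\lfloor\partial\{u_\varepsilon>0\}$ follows from (1) together with the fact that $\mathcal{H}^{n-1}\lfloor\partial\{u_\varepsilon>0\}$ and $\Delta u_\varepsilon$ are mutually absolutely continuous (one direction is (1), the other is the lower density bound). The constancy $q_{u_\varepsilon}=\lambda_\varepsilon$ on the free boundary is the genuinely nontrivial point and the one I expect to be the main obstacle: it is the content of the ``representation theorem'' of \cite{AAC 1986} and uses the volume-penalization structure in an essential way. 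The idea is a comparison/domain-variation argument — perturbing $u_\varepsilon$ by pushing the free boundary in and out along a vector field and using the minimality in $J^e_\varepsilon$ — which shows that the Lagrange-multiplier-type quantity $q_{u_\varepsilon}$ associated with the penalization term $f_\varepsilon$ cannot jump; because $f_\varepsilon$ is piecewise linear with slopes $\varepsilon$ and $1/\varepsilon$, and because Lemma \ref{prop-2} (5) pins the volume $\mathcal{L}^n(\{u_\varepsilon>0\}\cap\Omega)$ into the range where the relevant slope is locked, $q_{u_\varepsilon}^2$ equals a single constant $\lambda_\varepsilon$ a.e. on the free boundary.

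Parts (3), (4), (5) are then standard consequences flowing from (1)--(2) exactly as in \cite{AC 1981}. For (3), the surface-measure Ahlfors regularity is immediate from the density bounds on $\Delta u_\varepsilon$ established for (1) combined with (2). For (4), at a point $x_0$ of the reduced boundary $\partial_{\mathrm{red}}\{u_\varepsilon>0\}$ one performs the usual blow-up: rescale $u_{r}(x)=u_\varepsilon(x_0+rx)/r$; by the Lipschitz bound and nondegeneracy the family $\{u_r\}$ is precompact in $C^{0,\alpha}_{\mathrm{loc}}$, any blow-up limit $u_0$ is a nonnegative global subharmonic function, harmonic on $\{u_0>0\}$, with linear growth, and — since $x_0$ is in the reduced boundary — the measures $\Delta u_r$ converge to the half-space surface measure, which forces $u_0$ to be the half-plane solution $q_{u_\varepsilon}(x_0)\max\{-x\cdot\nu(x_0),0\}$; this is the asymptotic expansion in (4). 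Finally (5): the expansion (4) shows $\partial\{u_\varepsilon>0\}\cap\Omega$ has a measure-theoretic normal $\mathcal{H}^{n-1}$-a.e., so it is $(n-1)$-rectifiable; the flat blow-up at reduced-boundary points lets one apply the Alt--Caffarelli regularity machinery (flatness implies $C^{1,\alpha}$, then Schauder bootstrapping gives smoothness since $u_\varepsilon$ is harmonic on each side) on the reduced boundary, while the remaining singular set $\partial\{u_\varepsilon>0\}\setminus\partial_{\mathrm{red}}\{u_\varepsilon>0\}$ has $\mathcal{H}^{n-1}$-measure zero by (1) together with the Federer-type criterion. I would simply cite \cite{AAC 1986,AC 1981} for (3)--(5) after verifying the hypotheses, since no new ideas beyond what is in those papers are needed once the reduction in Lemma \ref{prop-2} is in place.
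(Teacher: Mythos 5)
Your proposal is correct and follows essentially the same route as the paper: the paper's entire proof of Theorem \ref{properties} is the observation (already recorded in Lemma \ref{prop-2}) that $u_\varepsilon$ restricted to $\Omega$ is nonnegative, locally Lipschitz, nondegenerate, harmonic on its positivity set, and minimizes the Aguilera--Alt--Caffarelli penalized functional, after which it cites \cite[\S4]{AC 1981} and \cite[\S2]{AAC 1986} verbatim. Your additional sketch of the internal mechanisms of those references (density bounds for $\Delta u_\varepsilon$, Riesz representation, the domain-variation argument for the constancy of $q_{u_\varepsilon}$, blow-ups at reduced boundary points, and flatness-implies-smoothness) is consistent with what those references actually do, so there is no gap.
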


\begin{proof}
The proof follows exactly as that of \cite[\S4]{AC 1981} and \cite[\S2]{AAC 1986}.
\end{proof}

\section{Behavior of solutions to problem \eqref{eq:P-e} for small $\varepsilon$}\label{sec:4}
This section aims to reveal the fact that a solution $u_{\varepsilon}$ to problem \eqref{eq:P-e} also solves problem \eqref{eq:P} when the penalization parameter $\varepsilon$ is small enough. Actually, it is only needed to verify that the volume of $\{u_{\varepsilon}>0\}\setminus\overline{D}$ adjusts to the prescribed volume $\mu$ for small $\varepsilon$. Here, unlike Theorem \ref{properties}, we cannot apply the perturbation result in \cite{AAC 1986} directly, since our boundary data on $\partial D$ vary with $\varepsilon$. Instead, we handle it by combining the ideas in \cite{AAC 1986} with a comparison argument.

\begin{lemma}\label{upp bd}
Let $u_\varepsilon$ be a solution to problem \eqref{eq:P-e}. Then
$$c\leq\lambda_{\varepsilon}\leq C,$$
where $\lambda_\varepsilon$ is given by Theorem \ref{properties} (2), $c$ and $C$ are positive constants independent of $\varepsilon$.
\end{lemma}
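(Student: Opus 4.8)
The plan is to obtain the two-sided bound on $\lambda_\varepsilon$ by exploiting the representation $\Delta u_\varepsilon = \lambda_\varepsilon \mathcal{H}^{n-1}\lfloor \partial\{u_\varepsilon>0\}$ from Theorem \ref{properties} together with comparison functions that do not see the $\varepsilon$-dependent boundary data on $\partial D$. For the \emph{lower} bound, I would argue by contradiction: if $\lambda_\varepsilon$ were very small, then $u_\varepsilon$ restricted to the exterior region would be a subsolution that is ``almost harmonic'' across its free boundary, and one can compare it from above with the harmonic function in an annulus $B_{2r}(x_0)\setminus\overline{B_r(x_0)}$ (for a suitable free boundary point $x_0$ away from $\partial D$) having the same boundary values. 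Since the penalized functional rewards shrinking $\{u_\varepsilon>0\}$ at rate $\varepsilon$ when the volume exceeds $\mu$ (and rate $1/\varepsilon$ — still bounded below by $\varepsilon$ — when it is below $\mu$), the optimality inequality $J_\varepsilon(u_\varepsilon)\le J_\varepsilon(v)$ for the competitor $v$ obtained by pushing $u_\varepsilon$ down forces a Dirichlet-energy gain at least comparable to $\varepsilon \cdot \mathcal{L}^n(\{u_\varepsilon>0\}\cap B_r(x_0))$; balancing this against the nondegeneracy estimate of Lemma \ref{prop-2}(2) yields $\lambda_\varepsilon\ge c>0$ with $c$ independent of $\varepsilon$. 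This is essentially the argument for the ``$\lambda$ bounded below'' part in \cite{AAC 1986,AC 1981}, and the only new point is to choose the free boundary point $x_0$ inside $\Omega\setminus D_\delta$, which is legitimate since by Lemma \ref{D-nghd2} the free boundary is confined there, while Lemma \ref{prop-2}(5) gives $\mathcal{L}^n(\{u_\varepsilon>0\}\cap\Omega)\le \mu+M\varepsilon$, bounding the region where the relevant balls live.

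For the \emph{upper} bound $\lambda_\varepsilon\le C$, the natural route is again comparison, now constructing a competitor that \emph{enlarges} the positive set slightly near a free boundary point. Take $x_0\in\partial\{u_\varepsilon>0\}\cap\Omega$ with a ball $B_r(x_0)\Subset\Omega$, and replace $u_\varepsilon$ in $B_r(x_0)$ by the solution $w$ of an obstacle-type problem (or simply by $\max\{u_\varepsilon, h\}$ where $h$ is a fixed positive bump, say a multiple of $(r^2-|x-x_0|^2)_+$ chosen small enough to respect the $L^\infty$ bound $u_\varepsilon\le \sup_D\phi$ from Lemma \ref{lem:P-e,existence}). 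This competitor lies in $K$, increases $\mathcal{L}^n(\{u_\varepsilon>0\}\cap\Omega)$ by at most $Cr^n$, and changes the penalization term by at most $\tfrac{1}{\varepsilon}\cdot Cr^n$ — which at first looks dangerous because of the $1/\varepsilon$. The fix is to observe that on the side $t\ge\mu$ the slope is $1/\varepsilon$ but on $t\le\mu$ it is only $\varepsilon$; since the real obstruction only appears when adding volume pushes $t$ across $\mu$, and since by the conclusion of Lemma \ref{prop-2}(5) the excess $\mathcal{L}^n(\{u_\varepsilon>0\}\cap\Omega)-\mu$ is already $O(\varepsilon)$, one should instead run the comparison so as to simultaneously subtract a controlled amount of volume elsewhere, keeping $t$ near $\mu$, so that the net change in $f_\varepsilon$ is bounded by $C$ (not $C/\varepsilon$). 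Then the optimality inequality reads: Dirichlet energy decrease from harmonic replacement $\ge -C$, and the energy decrease is comparable to $\lambda_\varepsilon r^{n-1}\cdot(\text{height of }h) - (\text{height})^2 r^{n-2}$; optimizing in the height gives $\lambda_\varepsilon\le C$.

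The main obstacle, and the step I would spend the most care on, is precisely the handling of the kink in $f_\varepsilon$ at $t=\mu$ in the upper bound: one must arrange the competitor so that the volume is redistributed rather than merely increased, so that the expensive slope $1/\varepsilon$ never gets multiplied by an $O(1)$ quantity. Concretely this means pairing the local ``bump up'' near $x_0$ with a local ``push down'' near a point where $u_\varepsilon$ is positive and bounded away from zero (such a point exists in $D_\delta\setminus\overline D$ by Lemma \ref{D-nghd2}), using the positive-density estimate Lemma \ref{prop-2}(3) to guarantee enough volume is available to remove, and checking the two modifications have disjoint supports so the energies add. Once the construction is set up so that $|f_\varepsilon(\mathcal{L}^n(\{v>0\}\cap\Omega)) - f_\varepsilon(\mathcal{L}^n(\{u_\varepsilon>0\}\cap\Omega))|\le C$ with $C$ independent of $\varepsilon$, both inequalities follow from standard energy comparison exactly as in \cite{AAC 1986}. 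A secondary, purely technical point is that the comparison balls must be kept a fixed distance from $\partial D$ (so that the $\varepsilon$-dependent trace $u_\varepsilon|_{\partial D}$ plays no role), which is ensured by Lemma \ref{D-nghd2}.
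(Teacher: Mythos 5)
Your proposal diverges from the paper's proof in the two places where the real work happens, and in both places the mechanism you describe does not close. For the \emph{lower} bound, the bookkeeping you sketch --- inward perturbation saves at least $\varepsilon\,\delta V$ in penalization versus a Dirichlet-energy cost of order $\lambda_\varepsilon^2\,\delta V$ --- only yields $\lambda_\varepsilon\gtrsim\sqrt{\varepsilon}$, which degenerates as $\varepsilon\to0$ (this is exactly the $\sqrt{\varepsilon}$ appearing in the nondegeneracy estimate of Lemma \ref{prop-2}(2)). The uniform lower bound has nothing to do with the slope of $f_\varepsilon$; it comes from the strictly positive boundary data $u_\varepsilon\geq\phi>0$ on $\partial D$ forcing linear growth with a fixed slope, which is the content of \cite[Lemma 6]{AAC 1986} that the paper invokes. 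For the \emph{upper} bound, your volume-redistribution competitor cannot work as stated: by the Hadamard formula the outward bump gains $\lambda_\varepsilon^2\,\delta V+o(\delta V)$ while the compensating inward perturbation costs $\lambda_\varepsilon^2\,\delta V+o(\delta V)$, so the first-order energy change of any volume-preserving redistribution vanishes (this is precisely why $q_{u_\varepsilon}$ is \emph{constant}, Theorem \ref{properties}(2)) and no bound on $\lambda_\varepsilon$ results; removing the volume instead by truncating $u_\varepsilon$ where it is bounded below by $c_0>0$ costs energy of order $c_0^2(\delta V)^{(n-2)/n}\gg\delta V$. Moreover your energy estimate for the bump needs a lower bound on $\mathcal{H}^{n-1}(\partial\{u_\varepsilon>0\}\cap B_r(x_0))$ that is uniform in $\varepsilon$, and Theorem \ref{properties}(3) gives constants depending on $u_\varepsilon$.

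The paper's upper bound uses no competitor at all. It first proves the two-sided volume bound $c\leq\mathcal{L}^n(\{u_\varepsilon>0\}\cap\Omega_{R_0})\leq\mu+C\varepsilon$ --- the lower bound via the Sobolev trace inequality, since $u_\varepsilon\geq\phi>0$ on $\partial D$ and $\|u_\varepsilon\|_{H^1}\leq C$ --- and then the relative isoperimetric inequality in a suitable $\Omega'\Subset\Omega_{R_0}$ to get $\mathcal{H}^{n-1}(\partial\{u_\varepsilon>0\}\cap\Omega')\geq c$ uniformly in $\varepsilon$. It then tests the identity $\Delta u_\varepsilon=\lambda_\varepsilon\mathcal{H}^{n-1}\lfloor\partial\{u_\varepsilon>0\}$ against $\max\{h_\varepsilon-u_\varepsilon,0\}$, where $h_\varepsilon$ is the harmonic function in $\Omega_{R_0}$ with data $u_\varepsilon$ on $\partial D$ and $0$ on $\partial B_{R_0}$; since $h_\varepsilon\geq h>0$ with $h$ independent of $\varepsilon$ and $\int_\Omega|\nabla u_\varepsilon|^2\leq C$, this gives $c\,\lambda_\varepsilon\leq\lambda_\varepsilon\inf_{\Omega'}h\cdot\mathcal{H}^{n-1}(\partial\{u_\varepsilon>0\}\cap\Omega')\leq C$. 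This trace-plus-isoperimetry step is the genuinely new ingredient needed because the boundary data on $\partial D$ vary with $\varepsilon$, and it is absent from your proposal.
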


\begin{proof}
We first prove that there are positive constants $c$ and $C$, independent of $\varepsilon$, such that
$$c\leq\mathcal{L}^n(\{x\in\Omega_{R_0}\mid u_{\varepsilon}(x)>0\})\leq \mu+C\varepsilon,$$
where $R_0$ is given in \eqref{R_0}. The estimate from above is already proved in Lemma \ref{prop-2} (5). On the other hand, in the proof of Lemma \ref{lem:P-e-R,existence} and Lemma \ref{lem:P-e,existence}, we see that
$$\int_{\Omega}|\nabla u_{\varepsilon}|^2\,\mathrm{d}x\leq C.$$
By Proposition \ref{prop:u-e bdd spt}, $u_\varepsilon$ has bounded support. Then Poincar\'{e} inequality gives
$$||u_{\varepsilon}||_{H^1(\Omega)}\leq C.$$
Note that $u_\varepsilon\geq0$ in $\Omega_{R_0}$ and $u_{\varepsilon}\geq\phi$ on $\partial D$ in the Sobolev trace sense. By Sobolev trace theorem and H\"{o}lder inequality, we have
\begin{equation*}
\begin{split}
\int_{\partial D}\phi\,\mathrm{d}\mathcal{H}^{n-1}&\leq\int_{\partial\Omega_{R_0}}u_{\varepsilon}\,\mathrm{d}\mathcal{H}^{n-1}\\
&\leq C\mathcal{L}^n(\{x\in\Omega_{R_0}\mid u_{\varepsilon}(x)>0\})^{\frac{1}{2}}||u_{\varepsilon}||_{H^1(\Omega_{R_0})}\\
&\leq C\mathcal{L}^n(\{x\in\Omega_{R_0}\mid u_{\varepsilon}(x)>0\})^{\frac{1}{2}}.
\end{split}
\end{equation*}
We thus obtain the estimate from below.

Take a smooth domain $\Omega'\Subset \Omega_{R_0}$ with
$$\mathcal{L}^n(\Omega')>\mu\quad\text{and}\quad\mathcal{L}^n(\Omega_{R_0}\setminus\Omega')<c,$$
where $c$ is the constant as above. Then for $\varepsilon$ small enough,
$$\mathcal{L}^n(\Omega'\cap\{u_{\varepsilon}>0\})\leq \mu+C\varepsilon<\mathcal{L}^n(\Omega').$$
On the other hand,
$$\mathcal{L}^n(\Omega'\cap\{u_{\varepsilon}>0\})\geq\mathcal{L}^n(\Omega_{R_0}\cap\{u_{\varepsilon}>0\})-\mathcal{L}^n(\Omega_{R_0}\setminus\Omega')\geq c-\mathcal{L}^n(\Omega_{R_0}\setminus\Omega')>0.$$
Hence, by the relative isoperimetric inequality, we have
$$\mathcal{H}^{n-1}(\Omega'\cap\partial\{u_{\varepsilon}>0\})\geq c\min\{\mathcal{L}^n(\Omega'\cap\{u_{\varepsilon}>0\}),\mathcal{L}^n(\Omega'\cap\{u_{\varepsilon}=0\})\}^{\frac{n-1}{n}}\geq c>0.$$
Let $h_{\varepsilon}$, $h$ be the harmonic functions in $\Omega_{R_0}$ vanishing on $\partial B_{R_0}$ and equal to $u_{\varepsilon}$, $\phi$ on $\partial D$, respectively. By the comparison principle and strong maximum principle,
$$h_{\varepsilon}\geq h>0\quad\text{in}~\Omega_{R_0}.$$
Assume $h_\varepsilon=0$ in $\mathbb{R}^n\setminus\overline{B}_{R_0}$. Since $\int_{\Omega}|\nabla u_{\varepsilon}|^2\,\mathrm{d}x$ is bounded, it follows from Theorem \ref{properties} (2) that
\begin{equation*}
\begin{split}
C&\geq\int_{\Omega}\nabla u_{\varepsilon}\cdot\nabla\min\{u_{\varepsilon}-h_{\varepsilon},0\}\,\mathrm{d}x\\
&=\int_{\partial\{u_{\varepsilon}>0\}\cap\Omega}\lambda_{\varepsilon}\max\{h_\varepsilon-u_\varepsilon,0\}\,\mathrm{d}\mathcal{H}^{n-1}\\
&=\int_{\partial\{u_{\varepsilon}>0\}\cap\Omega_{R_0}}\lambda_{\varepsilon}h_{\varepsilon}\,\mathrm{d}\mathcal{H}^{n-1}\\
&\geq\lambda_{\varepsilon}\inf_{\Omega'}h\,\mathcal{H}^{n-1}(\partial\{u_{\varepsilon}>0\}\cap\Omega')\geq c\lambda_{\varepsilon}.
\end{split}
\end{equation*}
This gives an upper bound for $\lambda_\varepsilon$.

Again by the fact that $u_{\varepsilon}\geq\phi$ on $\partial D$ in the Sobolev trace sense and $\phi$ is strictly positive on $\partial D$, the same arguments as in \cite[Lemma 6]{AAC 1986} can be applied to $u_\varepsilon$ to give the lower bound for $\lambda_\varepsilon$. This finishes the proof.
\end{proof}

With the uniform bounds on $\lambda_{\varepsilon}$ and the fact that $f'_\varepsilon(s)$ jumps at $s=\mu$, we are ready to prove the desired result.

\begin{prop}\label{small epsi}
There exists $\varepsilon_0>0$ such that if $u_\varepsilon$ is a solution to problem \eqref{eq:P-e}, then for $0<\varepsilon<\varepsilon_0$,
$$\mathcal{L}^n(\Omega\cap\{u_{\varepsilon}>0\})=\mu.$$
Therefore, $u_{\varepsilon}$ is a solution to problem \eqref{eq:P} for such $\varepsilon$. On the contrary, if $u$ is a solution to problem \eqref{eq:P}, then $u$ is a solution to problem \eqref{eq:P-e} for any $0<\varepsilon<\varepsilon_0$.
\end{prop}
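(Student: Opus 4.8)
The plan is to use the bounds $c \leq \lambda_\varepsilon \leq C$ from Lemma \ref{upp bd} together with the jump of $f_\varepsilon'$ at $\mu$ to rule out, for $\varepsilon$ small, both the possibility that the volume $V_\varepsilon := \mathcal{L}^n(\Omega \cap \{u_\varepsilon>0\})$ exceeds $\mu$ and that it falls below $\mu$; then $V_\varepsilon = \mu$ forces $f_\varepsilon(V_\varepsilon)=0$, so $u_\varepsilon \in K_\mu$ and $J(u_\varepsilon)=J_\varepsilon(u_\varepsilon) \leq J_\varepsilon(v)=J(v)$ for every $v \in K_\mu$, whence $u_\varepsilon$ solves \eqref{eq:P}. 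First I would fix a competitor: by Lemma \ref{prop-2}(5) we already know $V_\varepsilon \leq \mu + M\varepsilon$, so the only scenario to exclude is $\mu < V_\varepsilon \leq \mu + M\varepsilon$. In that regime $f_\varepsilon$ is the steep branch, and I would perform an inward perturbation of the positive set. Concretely, take a point $x_0 \in \partial\{u_\varepsilon>0\} \cap \Omega$ away from $\partial D$ and a small ball $B_\rho(x_0)$; replace $u_\varepsilon$ on $B_\rho(x_0)$ by the harmonic function with its own boundary data pushed down (or by $\max\{u_\varepsilon - \text{(capacity potential)},0\}$-type competitor) so as to kill a definite volume $\sim \rho^n$ of the positive phase while paying Dirichlet energy at most $C\lambda_\varepsilon \rho^{n-1}\cdot\rho = C\lambda_\varepsilon\rho^n$ — here the linear growth (Lemma \ref{prop-2}(1),(2)) and the representation $\Delta u_\varepsilon = \lambda_\varepsilon \mathcal{H}^{n-1}\lfloor \partial\{u_\varepsilon>0\}$ (Theorem \ref{properties}(2)) give the precise bookkeeping of the energy change as $\lambda_\varepsilon^2 |\,\text{removed volume}\,| + o(\cdot)$. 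The point is that the energy gain is $O(\lambda_\varepsilon^2 \rho^n) = O(\rho^n)$ uniformly in $\varepsilon$, whereas the penalization term drops by $\tfrac{1}{\varepsilon}\cdot(\text{removed volume}) \sim \tfrac{1}{\varepsilon}\rho^n$, which beats it once $\varepsilon < \varepsilon_0$; this contradicts minimality, so $V_\varepsilon \leq \mu$.

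Symmetrically, to exclude $V_\varepsilon < \mu$ I would do an outward perturbation: since $V_\varepsilon$ is bounded below by the constant $c$ from Lemma \ref{upp bd} but is assumed $<\mu$, the region $\Omega \setminus \{u_\varepsilon>0\}$ still has ample room near $\partial\{u_\varepsilon>0\}$, and I would add a small bump of positive phase (e.g. replace $u_\varepsilon$ by the solution of an obstacle problem from below, or glue in a small harmonic cap) gaining volume $\sim \rho^n$ at Dirichlet-energy cost $O(\lambda_\varepsilon^2 \rho^n)=O(\rho^n)$ again controlled by the uniform bound on $\lambda_\varepsilon$, while the penalty now only increases by $\varepsilon\cdot(\text{added volume})\sim\varepsilon\rho^n$ on the shallow branch — so for $\varepsilon$ small this is again a net decrease of $J_\varepsilon$, contradicting minimality. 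Combining the two inequalities yields $V_\varepsilon=\mu$. For the converse, if $u$ solves \eqref{eq:P} then $u \in K$ and $f_\varepsilon(\mathcal{L}^n(\Omega\cap\{u>0\}))=f_\varepsilon(\mu)=0$, so $J_\varepsilon(u)=J(u)$; for any $v\in K$ with volume $V$ we have $J_\varepsilon(v) = J(v)+f_\varepsilon(V) \geq J(v) \geq$ (if $V=\mu$) $J(u)$, and if $V\neq\mu$ one uses that a solution $u_\varepsilon$ of \eqref{eq:P-e} has volume exactly $\mu$ for $\varepsilon<\varepsilon_0$ (just proved), hence $J_\varepsilon(v) \geq J_\varepsilon(u_\varepsilon) = J(u_\varepsilon) \geq J(u)$, the last step because $u_\varepsilon \in K_\mu$ and $u$ minimizes $J$ over $K_\mu$; thus $u$ minimizes $J_\varepsilon$ over $K$.

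The main obstacle I anticipate is making the two competitor constructions clean and quantitative \emph{with constants uniform in $\varepsilon$}: one must choose the perturbation ball in a fixed compact subset of $\Omega$ (using Lemma \ref{D-nghd2} and the uniform support bound $B_{M_0}$ of Proposition \ref{prop:u-e bdd spt} to locate the free boundary in a region independent of $\varepsilon$), control the removed/added volume from below by $c\rho^n$ via the density estimates of Lemma \ref{prop-2}(3) and the surface bounds of Theorem \ref{properties}(3), and estimate the energy change through the distributional identity of Theorem \ref{properties}(2) rather than pointwise gradient bounds (which only give $O(1/\varepsilon)$ and would be useless here). Once the energy change is pinned down as $\lambda_\varepsilon \times (\text{flux change})+o$, the uniform two-sided control $c\leq\lambda_\varepsilon\leq C$ is exactly what converts "energy cost $O(\rho^n)$ vs. penalty change $\gtrsim \varepsilon^{-1}\rho^n$ or $\lesssim \varepsilon\rho^n$" into the contradiction; this is where the $f_\varepsilon$-jump does its work.
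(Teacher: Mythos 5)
Your overall strategy coincides with the paper's: perturb the positive phase inward/outward near a regular point of $\partial\{u_\varepsilon>0\}\cap\Omega$, compute the change of Dirichlet energy via Hadamard's variational formula as $\lambda_\varepsilon^2\,\delta(V)+o(\delta(V))$, and play the two-sided bounds $c\le\lambda_\varepsilon\le C$ of Lemma \ref{upp bd} against the jump of $f_\varepsilon'$ at $\mu$. The inward case (ruling out $\mathcal{L}^n(\Omega\cap\{u_\varepsilon>0\})>\mu$) and the converse implication are handled correctly, and your attention to keeping the perturbation in a compact subset of $\Omega$ independent of $\varepsilon$ (via Lemma \ref{D-nghd2} and Proposition \ref{prop:u-e bdd spt}) is consistent with what the paper does implicitly.

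However, your outward case contains a sign error that, as written, destroys the contradiction. You say the outward perturbation comes ``at Dirichlet-energy cost $O(\lambda_\varepsilon^2\rho^n)=O(\rho^n)$'' while the penalty ``increases by $\varepsilon\rho^n$,'' and conclude a net decrease of $J_\varepsilon$ --- but a positive energy cost plus a positive penalty increase is a net \emph{increase}, and no contradiction follows. The correct mechanism (and the one the paper uses) is that enlarging the positive set \emph{decreases} the Dirichlet energy by $\lambda_\varepsilon^2\,\delta(V)+o(\delta(V))$; the lower bound $\lambda_\varepsilon\ge c$ then guarantees an energy gain of at least $c^2\,\delta(V)$, which dominates the penalty increase $\varepsilon\,\delta(V)$ on the shallow branch once $\varepsilon<c^2$, so the total change is $\le(\varepsilon-c^2)\,\delta(V)+o(\delta(V))<0$, contradicting minimality. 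You correctly sense in your closing paragraph that the lower bound on $\lambda_\varepsilon$ is what makes this half work, but your bookkeeping never actually uses it: an upper bound on the energy change cannot rule out $V_\varepsilon<\mu$. One further small slip in the converse: the unconditional inequality $J_\varepsilon(v)=J(v)+f_\varepsilon(V)\ge J(v)$ is false when $V<\mu$ (there $f_\varepsilon(V)<0$), so the detour through a minimizer $u_\varepsilon$ of \eqref{eq:P-e} with volume exactly $\mu$ is not optional --- it is the argument that covers all competitors $v\in K$.
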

\begin{proof}
We prove it by contradiction. Suppose first that $\mathcal{L}^n(\Omega\cap\{u_{\varepsilon}>0\})>\mu$. Let $x_0\in\partial\{u_{\varepsilon}>0\}$ be a regular free boundary point, that is, $\partial\{u_{\varepsilon}>0\}$ is smooth in $B_r(x_0)$ for small $r>0$. Near $x_0$ we make a smooth inward perturbation of the set $\{u_{\varepsilon}>0\}$ decreasing its volume by $\delta(V)$. We choose $\delta(V)$ small such that
$$\mathcal{L}^n(\Omega\cap\{u_{\varepsilon}>0\})-\delta(V)\geq\mu.$$
Denote the perturbed set by $P$, where
$$P\subset B_r(x_0)\cap\{u_{\varepsilon}>0\}.$$
Let $v_{\varepsilon}$ be the function in $B_r(x_0)$ satisfying
\begin{equation*}
\begin{cases}
\Delta v_{\varepsilon}=0,\quad&\text{in}~P,\\
v_{\varepsilon}=0,\quad&\text{in}~B_r(x_0)\setminus P,\\
v_{\varepsilon}=u_{\varepsilon},\quad&\text{on}~\partial P\cap\partial B_{r}(x_0),
\end{cases}
\end{equation*}
and let
$$
w_{\varepsilon}=
\begin{cases}
v_{\varepsilon},\quad&\text{in}~B_r(x_0),\\
u_{\varepsilon},\quad&\text{in}~\mathbb{R}^n\setminus B_r(x_0).
\end{cases}
$$
By Hadamard variational principle and Lemma \ref{upp bd}, we have
\begin{equation*}
\begin{split}
\int_{B_r(x_0)}(|\nabla v_{\varepsilon}|^2-|\nabla u_{\epsilon}|^2)\,\mathrm{d}x&=\lambda_{\varepsilon}^2\delta(V)+o(\delta(V))\\
&\leq C^2\delta(V)+o(\delta(V)).
\end{split}
\end{equation*}
Hence,
\begin{equation*}
\begin{split}
0\leq&J_{\varepsilon}(w_{\varepsilon})-J_{\varepsilon}(u_{\varepsilon})\\
=&\int_{B_r(x_0)}|\nabla v_{\varepsilon}|^2\,\mathrm{d}x-\int_{B_r(x_0)}|\nabla u_{\varepsilon}|^2\,\mathrm{d}x\\
&+f_{\varepsilon}(\mathcal{L}^n(\Omega\cap\{w_\varepsilon>0\}))-f_{\varepsilon}(\mathcal{L}^n(\Omega\cap\{u_\varepsilon>0\}))\\
\leq&C^2\delta(V)-\frac{1}{\varepsilon}\delta(V)+o(\delta(V))<0,
\end{split}
\end{equation*}
provided $\varepsilon<\frac{1}{2C^2}$, which is a contradiction.

Now suppose that $\mathcal{L}^n(\Omega\cap\{u_{\varepsilon}>0\})<\mu$. We make a smooth outward perturbation near a regular free boundary point. Due the facts that $\lambda_\varepsilon$ has a lower bound and
$$f'_\varepsilon(s)=\varepsilon\quad\text{for}~s<\mu,$$
similar arguments leads to a contradiction. Thus, we conclude that if $\varepsilon$ is small enough,
$$\mathcal{L}^n(\Omega\cap\{u_{\varepsilon}>0\})=\mu,$$
and the conclusion follows immediately.
\end{proof}

\section{Proofs of Theorem \ref{thm:main 1} and Theorem \ref{thm:main 2}}\label{sec:5}

At this stage, since the original problem \eqref{eq:P} is recovered from problem \eqref{eq:P-e} for small $\varepsilon$ (Proposition \ref{small epsi}), we already know that problem \eqref{eq:P} admits a solution $u_{\varepsilon}$ which is in $C^{0,1}_{\mathrm{loc}}(\mathbb{R}^n\setminus\overline{D})\cap C^{1,1}_{\mathrm{loc}}(D)$ (Lemmas \ref{lem:P-e,existence} and \ref{prop-2}), but the regularity up to $\partial D$ has not been treated. To fill this gap, we will enlarge the obstacles to be piecewise smooth in a neighborhood of $D$ and identify $u_{\varepsilon}$ as a solution to a double obstacle problem near $\partial D$. Then by reducing that problem locally to a single obstacle problem, we prove that $u_{\varepsilon}$ is Lipschitz continuous across $\partial D$. Consequently, together with Theorem \ref{properties}, we can conclude Theorem \ref{thm:main 1}. Furthermore, we prove that $u_\varepsilon$ is $C^{1,\frac{1}{2}}$ up to $\partial D$ and thus obtain Theorem \ref{thm:main 2}.

To begin with, take $\delta_\varepsilon=\frac{\delta}{2}$ where $\delta$ is as in Lemma \ref{D-nghd2}. Let us consider the domain $D_{\delta_\varepsilon}\Supset D$ and enlarge the obstacle $\phi$ by taking an auxiliary function $\phi_{\varepsilon}$ such that
\begin{equation*}
\begin{cases}
\Delta \phi_{\varepsilon}=0,\quad&\text{in}~D_{\delta_\varepsilon}\setminus\overline{D},\\
\phi_{\varepsilon}=\phi,\quad&\text{on}~\overline{D},\\
\phi_{\varepsilon}=u_{\varepsilon},~\quad&\text{on}~\partial D_{\delta_\varepsilon}.
\end{cases}
\end{equation*}
Enlarge $\psi$ in the same way and denote the resulting function by $\psi_{\varepsilon}$. Clearly, $u_\varepsilon$ is smooth on $\partial D_{\delta_\varepsilon}$ in view of Lemma \ref{prop-2} (4) and Lemma \ref{D-nghd2}. By boundary estimates for harmonic functions, it is seen that
$$\phi_{\varepsilon},~\psi_{\varepsilon}\in C^{0,1}(\overline{D}_{\delta_{\varepsilon}})\cap C^{1,1}(\overline{D})\cap C^{1,1}(\overline{D}_{\delta_{\varepsilon}}\setminus D).$$
Let $\varepsilon_0$ be as in Proposition \ref{small epsi}. Given $0<\varepsilon<\varepsilon_0$, we show that $u_{\varepsilon}$ solves the double obstacle problem in $D_{\delta_\varepsilon}$ with $\phi_{\varepsilon}$ and $\psi_{\varepsilon}$ as lower and upper obstacles, respectively.

\begin{lemma}\label{u-epsi}
Let $u_{\varepsilon}$ be a solution to problem \eqref{eq:P-e}. Then for $0<\varepsilon<\varepsilon_0$, $u_{\varepsilon}$ minimizes the energy functional
$$J^i_{\varepsilon}(u):=\int_{D_{\delta_\varepsilon}}|\nabla u|^2\,\mathrm{d}x$$
over
$$K^i_{\varepsilon}:=\{u\in H^1(D_{\delta_\varepsilon})\mid\phi_{\varepsilon}\leq u\leq\psi_{\varepsilon}~\mathrm{a.e.~in}~D_{\delta_\varepsilon},~u=u_{\varepsilon}~\mathrm{on}~\partial D_{\delta_\varepsilon}\}.$$
\end{lemma}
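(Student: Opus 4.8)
The plan is to view $u_\varepsilon$ as a global competitor for the penalization problem \eqref{eq:P-e} after cutting and pasting inside $D_{\delta_\varepsilon}$, and to show that such a surgery changes neither the admissibility for \eqref{eq:P-e} nor the volume penalization term, so that the minimality of $u_\varepsilon$ for \eqref{eq:P-e} transfers to minimality of the pure Dirichlet energy over $K^i_\varepsilon$. Concretely, given an arbitrary $v\in K^i_\varepsilon$, I would form
\[
\tilde v=\begin{cases}v,&\text{in }D_{\delta_\varepsilon},\\ u_\varepsilon,&\text{in }\mathbb{R}^n\setminus D_{\delta_\varepsilon},\end{cases}
\]
which belongs to $H^1(\mathbb{R}^n)$ because $v=u_\varepsilon$ on $\partial D_{\delta_\varepsilon}$ in the trace sense. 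Since $\phi_\varepsilon=\phi$ and $\psi_\varepsilon=\psi$ on $\overline D$, the constraint $\phi_\varepsilon\le v\le\psi_\varepsilon$ a.e.\ in $D_{\delta_\varepsilon}$ yields $\phi\le\tilde v\le\psi$ a.e.\ in $D$, hence $\tilde v\in K$ and $J_\varepsilon(u_\varepsilon)\le J_\varepsilon(\tilde v)$. Splitting the Dirichlet integrals over $D_{\delta_\varepsilon}$ and its complement, where $\tilde v\equiv u_\varepsilon$, this reads
\[
\int_{D_{\delta_\varepsilon}}|\nabla u_\varepsilon|^2\,\mathrm{d}x+f_\varepsilon\big(\mathcal{L}^n(\{u_\varepsilon>0\}\cap\Omega)\big)\le\int_{D_{\delta_\varepsilon}}|\nabla v|^2\,\mathrm{d}x+f_\varepsilon\big(\mathcal{L}^n(\{\tilde v>0\}\cap\Omega)\big),
\]
so everything comes down to showing the two penalization terms coincide.

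That is where the geometry of the positive phase near $\partial D$ enters. Since $\delta_\varepsilon=\delta/2<\delta$, Lemma \ref{D-nghd2} gives $D_{\delta_\varepsilon}\setminus\overline D\subset D_\delta\setminus\overline D\subset\{u_\varepsilon>0\}$, and also $\partial D_{\delta_\varepsilon}\subset\{u_\varepsilon>0\}$; as $\partial D_{\delta_\varepsilon}$ is a compact subset of $\Omega$ on which $u_\varepsilon$ is continuous, $u_\varepsilon\ge c>0$ there. Consequently $\phi_\varepsilon$, harmonic in $D_{\delta_\varepsilon}\setminus\overline D$ with boundary values $\phi>0$ on $\partial D$ and $u_\varepsilon\ge c>0$ on $\partial D_{\delta_\varepsilon}$, is bounded below by a positive constant in $D_{\delta_\varepsilon}\setminus\overline D$ by the minimum principle, whence $v\ge\phi_\varepsilon>0$ a.e.\ there as well. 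Because $\tilde v\equiv u_\varepsilon$ outside $D_{\delta_\varepsilon}$ and $\Omega\cap D=\emptyset$, the sets $\{\tilde v>0\}\cap\Omega$ and $\{u_\varepsilon>0\}\cap\Omega$ differ only by a Lebesgue-null set, so the penalization terms are equal and the displayed inequality collapses to $\int_{D_{\delta_\varepsilon}}|\nabla u_\varepsilon|^2\,\mathrm{d}x\le\int_{D_{\delta_\varepsilon}}|\nabla v|^2\,\mathrm{d}x$, i.e.\ $J^i_\varepsilon(u_\varepsilon)\le J^i_\varepsilon(v)$.

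It remains to verify that $u_\varepsilon$ itself lies in $K^i_\varepsilon$; I expect this comparison step to be the only point requiring genuine care, since $\phi_\varepsilon$ and $\psi_\varepsilon$ are built from $u_\varepsilon$ by harmonic extension. On $D$ the bound $\phi_\varepsilon\le u_\varepsilon\le\psi_\varepsilon$ a.e.\ is immediate from $u_\varepsilon\in K$ and $\phi_\varepsilon=\phi$, $\psi_\varepsilon=\psi$ on $\overline D$. On $D_{\delta_\varepsilon}\setminus\overline D$ both $\phi_\varepsilon$ and $\psi_\varepsilon$ are harmonic by construction, and $u_\varepsilon$ is harmonic there by Lemma \ref{prop-2} (4) because $D_{\delta_\varepsilon}\setminus\overline D\subset\{u_\varepsilon>0\}\cap\Omega$; moreover on the boundary $\partial D\cup\partial D_{\delta_\varepsilon}$ we have $\phi_\varepsilon=u_\varepsilon=\psi_\varepsilon$ on $\partial D_{\delta_\varepsilon}$, while on $\partial D$ the trace of $u_\varepsilon$ (well defined as $u_\varepsilon\in H^1(\mathbb{R}^n)$) satisfies $\phi=\phi_\varepsilon\le u_\varepsilon\le\psi_\varepsilon=\psi$. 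Applying the maximum principle to the harmonic functions $u_\varepsilon-\phi_\varepsilon$ and $\psi_\varepsilon-u_\varepsilon$, which are nonnegative on that boundary, gives $\phi_\varepsilon\le u_\varepsilon\le\psi_\varepsilon$ throughout $D_{\delta_\varepsilon}\setminus\overline D$, so $u_\varepsilon\in K^i_\varepsilon$. Combined with the energy inequality above, this shows $u_\varepsilon$ minimizes $J^i_\varepsilon$ over $K^i_\varepsilon$. (The hypothesis $0<\varepsilon<\varepsilon_0$ is inherited from the standing setup of the section in which $\phi_\varepsilon,\psi_\varepsilon$ are defined; the argument itself uses only the minimality of $u_\varepsilon$ for \eqref{eq:P-e} together with Lemmas \ref{D-nghd2} and \ref{prop-2}.)
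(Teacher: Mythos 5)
Your proposal is correct and follows essentially the same route as the paper: identify $u_\varepsilon\in K^i_\varepsilon$ via the comparison principle for the harmonic functions $\phi_\varepsilon,\psi_\varepsilon,u_\varepsilon$ on $D_{\delta_\varepsilon}\setminus\overline D$, show every competitor is strictly positive there because $\phi_\varepsilon>0$, and conclude that the glued competitor has the same positivity set in $\Omega$ up to a null set, so the penalization term cancels and only the Dirichlet energies compare. The only cosmetic difference is that the paper routes the equality of the volume terms through Proposition \ref{small epsi} (both volumes equal $\mu$), whereas you observe directly that the two positivity sets coincide a.e., which is equally valid.
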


\begin{proof}
By Lemma \ref{prop-2} (4) and Lemma \ref{D-nghd2}, we have
\begin{equation}\label{eq:Delta-u-e=0}
\Delta u_{\varepsilon}=0\quad\text{in}~D_{\delta_\varepsilon}\setminus\overline{D}.
\end{equation}
By the definition of $\phi_{\varepsilon}$, $\psi_{\varepsilon}$ and $\phi\leq u_{\varepsilon}\leq\psi$ a.e. in $D$, the comparison principle implies
$$\phi_{\varepsilon}\leq u_{\varepsilon}\leq\psi_{\varepsilon}\quad\text{a.e.~in}~D_{\delta_\varepsilon}.$$
Hence $u_{\varepsilon}\in K^i_{\varepsilon}$.

Since $\phi_{\varepsilon}>0$ on $\partial(D_{\delta_\varepsilon}\setminus\overline{D})$, the strong maximum principle gives the positivity of $\phi_{\varepsilon}$ in $D_{\delta_\varepsilon}$. We thus get
$$u>0\quad\text{a.e.~in}~D_{\delta_\varepsilon},\quad\forall~u\in K^i_{\varepsilon}.$$
By assuming $u=u_{\varepsilon}$ in $\mathbb{R}^n\setminus\overline{D}_{\delta_{\varepsilon}}$, the function $u\in K^i_{\varepsilon}$ can be considered as belonging to $K$. It follows from Proposition \ref{small epsi} that
$$\mathcal{L}^n(\{u>0\}\setminus\overline{D})=\mathcal{L}^n(\{u_{\varepsilon}>0\}\setminus\overline{D})=\mu,\quad\forall~u\in K^i_{\varepsilon}.$$
Using the minimality of $u_{\varepsilon}$, the conclusion follows.
\end{proof}

Based on the above observation, we can further derive the Lipschitz continuity of $u_{\varepsilon}$ in $D_{\delta_\varepsilon}$. For this,
we need to introduce the following estimate for harmonic functions.
\begin{lemma}\label{control}
Let $U$ be a bounded open set in $\mathbb{R}^n$. Let $h\in C^0(\overline{U\cap B_r})$ be harmonic and $g\in W^{1,\infty}(U\cap B_r)\cap C^0(\overline{U\cap B_r})$ with $\|g\|_{ W^{1,\infty}(U\cap B_r)}\leq L$. Assume that
$$|h(x)-g(x)|\leq L\,\mathrm{dist}(x,\partial U\cap B_r),\quad\forall~x\in U\cap B_r.$$
Then there exists a positive constant $C=C(n)$ such that
$$|h(x)-h(y)|\leq CL|x-y|$$
for any $x,y\in U\cap B_r$ satisfying
\begin{equation}\label{U-nbhd}
\mathrm{dist}(x,\partial U)<\frac{1}{4}(r-|x|)\quad\mathrm{and}\quad\mathrm{dist}(y,\partial U)<\frac{1}{4}(r-|y|).
\end{equation}

\end{lemma}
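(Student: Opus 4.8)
The plan is to prove this by an interior estimate combined with the maximum principle, capturing the intuition that a harmonic function which stays close to a Lipschitz function near the boundary of $U$ must itself be Lipschitz in the region near that boundary. First I would fix $x,y\in U\cap B_r$ satisfying \eqref{U-nbhd} and, without loss of generality, assume $\mathrm{dist}(x,\partial U)\le\mathrm{dist}(y,\partial U)$; write $d_x=\mathrm{dist}(x,\partial U)$ and $d_y=\mathrm{dist}(y,\partial U)$. The key tool is the interior gradient estimate for harmonic functions: on any ball $B_\rho(z)\subset U\cap B_r$ one has $|\nabla h(z)|\le \frac{C(n)}{\rho}\,\mathrm{osc}_{B_\rho(z)} h$. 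To exploit the hypothesis, I would bound the oscillation of $h$ on a ball of radius comparable to $d_x$ around a point on the segment $[x,y]$: since $h=g+(h-g)$, the oscillation of $h$ is at most $\mathrm{osc}\,g + 2\sup|h-g|$, and both terms are controlled by $L$ times the radius, using $\|g\|_{W^{1,\infty}}\le L$ for the first and the standing assumption $|h-g|\le L\,\mathrm{dist}(\cdot,\partial U\cap B_r)$ for the second.

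The main case is the \emph{near-diagonal} one, $|x-y|\le \tfrac12 d_x$ (or some fixed fraction). Then the ball $B_{d_x}(x)$ contains $y$, is contained in $U$, and—because of \eqref{U-nbhd}, which forces $d_x<\tfrac14(r-|x|)$—is also contained in $B_r$; so $B_{d_x}(x)\subset U\cap B_r$. On $B_{d_x}(x)$ every point is within distance $2d_x$ of $\partial U\cap B_r$ (again using \eqref{U-nbhd} to know the nearest boundary point lies in $B_r$), hence $|h-g|\le 2Ld_x$ there, and $\mathrm{osc}_{B_{d_x}(x)}g\le 2Ld_x$ since $g$ is $L$-Lipschitz; therefore $\mathrm{osc}_{B_{d_x}(x)}h\le 6Ld_x$. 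The interior estimate on, say, $B_{d_x/2}(x)$ then yields $\|\nabla h\|_{L^\infty(B_{d_x/2}(x))}\le C(n)L$, and integrating along the segment from $x$ to $y$ (which stays in $B_{d_x/2}(x)$ by the near-diagonal assumption) gives $|h(x)-h(y)|\le C(n)L|x-y|$.

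For the \emph{far} case $|x-y|>\tfrac12 d_x$, I would argue by triangle inequality through the respective nearest boundary points or, more directly, by comparing each of $h(x),h(y)$ to $g$ at the same points. Pick $x^\ast\in\partial U$ with $|x-x^\ast|=d_x$; the hypothesis gives $|h(x)-g(x)|\le Ld_x$, while $|g(x)-g(y)|\le L|x-y|$ and similarly $|h(y)-g(y)|\le Ld_y$. It remains to absorb $d_x$ and $d_y$ into $|x-y|$: in the far case $d_x<2|x-y|$ directly, and $d_y\le d_x + |x-y| < 3|x-y|$ by the ordering $d_x\le d_y$... (here one must be slightly careful: if instead $d_y\le d_x$ one uses the symmetric statement). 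Combining, $|h(x)-h(y)|\le |h(x)-g(x)|+|g(x)-g(y)|+|g(y)-h(y)|\le C L|x-y|$.

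The step I expect to be the main obstacle is the bookkeeping needed to guarantee that all the balls I use actually lie inside $U\cap B_r$ and that the nearest boundary points used to apply the hypothesis lie in $\partial U\cap B_r$ (not merely in $\partial U$): this is precisely what condition \eqref{U-nbhd} is designed to ensure, via the inequality $\mathrm{dist}(x,\partial U)<\tfrac14(r-|x|)$, but threading it correctly through the near and far cases—and checking that the resulting constant depends only on $n$—requires care. Everything else is a routine combination of the interior gradient estimate for harmonic functions and the triangle inequality.
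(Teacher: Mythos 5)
Your proposal is correct and follows essentially the same two-case argument as the paper: when $|x-y|$ is comparable to the distances to $\partial U$ you pass through $g$ via the triangle inequality and the hypothesis at the nearest boundary points (which \eqref{U-nbhd} places inside $B_r$), and otherwise you apply the interior gradient estimate for $h$ on a ball of radius $\mathrm{dist}(x,\partial U)$, controlling the oscillation of $h$ there by that of $g$ plus $\sup|h-g|$. The only differences from the paper's proof are cosmetic (your threshold $\tfrac12\min\{d_x,d_y\}$ versus the paper's $\tfrac14\max$, and oscillation versus $\|h-g(x^*)\|_{L^\infty}$ in the gradient estimate), so the argument is complete as outlined.
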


\begin{proof}
We consider two cases:
\begin{enumerate}
  \item [(1)] Suppose that $|x-y|\geq\frac{1}{4}\max\{\dist(x,\partial U),\dist(y,\partial U)\}$. Take $x^*,y^*\in\partial U$ such that
$$|x-x^*|=\dist(x,\partial U)\quad\text{and}\quad|y-y^*|=\dist(y,\partial U).$$
By assumption \eqref{U-nbhd}, we have $x^*,y^*\in\partial U\cap B_r$. Then
\begin{equation*}
\begin{split}
|h(x)-h(y)|&\leq|h(x)-g(x)|+|g(x)-g(y)|+|h(y)-g(y)|\\
&\leq L|x-x^*|+L|x-y|+L|y-y^*|\\
&\leq 9L|x-y|.
\end{split}
\end{equation*}
  \item [(2)] Suppose that $|x-y|<\frac{1}{4}\dist(x,\partial U)=\frac{1}{4}\max\{\dist(x,\partial U),\dist(y,\partial U)\}$. Assume $x^*$ as above and denote $\rho=\dist(x,\partial U)$. Then $y\in B_{\frac{\rho}{4}}(x)$ and $B_{\frac{\rho}{2}}(x)\subset U\cap B_{r}$ by \eqref{U-nbhd} again.
Since $h$ is harmonic in $B_{\frac{\rho}{2}}(x)$, the interior gradient estimate gives
\begin{equation*}
\begin{split}
\|\nabla h\|_{L^{\infty}\left(B_{\frac{\rho}{4}(x)}\right)}&\leq\frac{C}{\rho}\|h-g(x^*)\|_{L^{\infty}\left(B_{\frac{\rho}{2}}(x)\right)}\\
&\leq\frac{C}{\rho}\left(\|h-g\|_{L^{\infty}\left(B_{\frac{\rho}{2}}(x)\right)}+\|g-g(x^*)\|_{L^{\infty}\left(B_{\frac{\rho}{2}}(x)\right)}\right)\\
&\leq\frac{2C}{\rho}(L\rho+L\rho)=2CL.
\end{split}
\end{equation*}
Hence, $|h(x)-h(y)|\leq 2CL|x-y|$.
\end{enumerate}
\end{proof}

By approximation arguments, we are able to apply regularity theory in classical obstacle problems to deduce the regularity of $u_\varepsilon$ in $D_{\delta_\varepsilon}$.

\begin{lemma}\label{Lip}
Let $u_\varepsilon$ be a solution to problem \eqref{eq:P-e}. Then for $0<\varepsilon<\varepsilon_0$,
$$u_{\varepsilon}\in C^{0,1}(\overline{D}_{\delta_{\varepsilon}}).$$
\end{lemma}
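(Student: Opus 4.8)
The plan is to prove Lipschitz regularity of $u_\varepsilon$ in $\overline{D}_{\delta_\varepsilon}$ by approximating the double obstacle problem from Lemma \ref{u-epsi} by a sequence of problems with smooth obstacles, applying known $C^{1,1}$ theory to the approximate solutions, and then passing to the limit with the help of the quantitative harmonic estimate of Lemma \ref{control}. Since $u_\varepsilon$ is already $C^{1,1}_{\mathrm{loc}}(D)$ and harmonic (hence smooth) in $D_{\delta_\varepsilon}\setminus\overline D$ by \eqref{eq:Delta-u-e=0}, the only issue is uniform control of $\nabla u_\varepsilon$ across the interface $\partial D$; so it suffices to bound $|u_\varepsilon(x)-u_\varepsilon(y)|\leq CL|x-y|$ for $x,y$ in a neighborhood of $\partial D$ inside $D_{\delta_\varepsilon}$.

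First I would fix a boundary point $z_0\in\partial D$ and a small ball $B_r(z_0)\subset D_{\delta_\varepsilon}$, and smooth the obstacles: choose $\phi_\varepsilon^k,\psi_\varepsilon^k\in C^\infty$ with $\phi_\varepsilon^k\leq\psi_\varepsilon^k$, converging to $\phi_\varepsilon,\psi_\varepsilon$ uniformly and in $W^{1,\infty}$ on $\overline{B_r(z_0)}$, with $W^{1,\infty}$-norms bounded by a fixed $L=L(\varepsilon)$ depending only on the $C^{0,1}(\overline D_{\delta_\varepsilon})\cap C^{1,1}(\overline D)\cap C^{1,1}(\overline D_{\delta_\varepsilon}\setminus D)$ bounds on $\phi_\varepsilon,\psi_\varepsilon$ established just before Lemma \ref{u-epsi}. (The point is that $\phi_\varepsilon$ and $\psi_\varepsilon$ are Lipschitz globally and $C^{1,1}$ on each side, so their mollifications have uniformly bounded Lipschitz constants.) Let $u_\varepsilon^k$ be the solution of the double obstacle problem with obstacles $\phi_\varepsilon^k,\psi_\varepsilon^k$ and boundary data $u_\varepsilon$ on $\partial B_r(z_0)$. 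By the classical $C^{1,1}$ theory of Chipot \cite{C 1979} and Caffarelli--Kinderlehrer \cite{CK 1980} applied with smooth obstacles, each $u_\varepsilon^k$ is $C^{1,1}$; moreover, in the noncoincidence set $\{\phi_\varepsilon^k<u_\varepsilon^k<\psi_\varepsilon^k\}$ the function $u_\varepsilon^k$ is harmonic, while $\Delta u_\varepsilon^k\in\{0,\Delta\phi_\varepsilon^k,\Delta\psi_\varepsilon^k\}$, so $\|\Delta u_\varepsilon^k\|_{L^\infty}\leq CL$. Standard energy and maximum-principle comparison gives $u_\varepsilon^k\to u_\varepsilon$ uniformly on $\overline{B_r(z_0)}$ (since $u_\varepsilon$ solves the limiting problem by Lemma \ref{u-epsi} and uniqueness holds for these strictly convex problems), so it is enough to obtain a Lipschitz bound for $u_\varepsilon^k$ uniform in $k$.

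To get that uniform Lipschitz bound I would decompose $B_r(z_0)$ into the coincidence set $\Lambda^k=\{u_\varepsilon^k=\phi_\varepsilon^k\}\cup\{u_\varepsilon^k=\psi_\varepsilon^k\}$ and its complement $U^k$, on which $u_\varepsilon^k$ is harmonic. On $\Lambda^k$ the gradient of $u_\varepsilon^k$ equals that of the relevant obstacle (by $C^{1,1}$-regularity the contact is $C^{1}$ and the gradients match), hence is bounded by $L$. On the harmonic part $U^k$, I apply Lemma \ref{control} with $U=U^k$, $h=u_\varepsilon^k$, and $g$ the obstacle that $u_\varepsilon^k$ touches along $\partial U^k$ (patched together — since on $\partial U^k\cap B_r$ we have $u_\varepsilon^k\in\{\phi_\varepsilon^k,\psi_\varepsilon^k\}$, and the hypothesis $|h-g|\leq L\,\dist(x,\partial U^k)$ follows from $\|u_\varepsilon^k-\phi_\varepsilon^k\|_{C^{1,1}}$ control, i.e. the second-order bound $|\Delta(u_\varepsilon^k-\phi_\varepsilon^k)|\leq CL$ together with vanishing of $u_\varepsilon^k-\phi_\varepsilon^k$ and its gradient on the free boundary, giving the quadratic-in-distance hence linear-in-distance bound up to a shrink of $r$). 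This furnishes $|u_\varepsilon^k(x)-u_\varepsilon^k(y)|\leq CL|x-y|$ for $x,y$ near $\partial U^k$; combined with the obstacle-side bound and interior gradient estimates for harmonic functions well inside $U^k$, one gets $\mathrm{Lip}(u_\varepsilon^k;B_{r/2}(z_0))\leq CL$ with $C=C(n)$. Letting $k\to\infty$ yields $\mathrm{Lip}(u_\varepsilon;B_{r/2}(z_0))\leq CL$, and covering $\partial D$ by finitely many such balls and combining with the interior $C^{1,1}$ bound in $D$ and the harmonic smoothness in $D_{\delta_\varepsilon}\setminus\overline D$ gives $u_\varepsilon\in C^{0,1}(\overline D_{\delta_\varepsilon})$.

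The main obstacle I anticipate is the bookkeeping in applying Lemma \ref{control} on the noncoincidence set $U^k$: the "comparison function" $g$ there is not a single smooth function but a patchwork of $\phi_\varepsilon^k$ and $\psi_\varepsilon^k$ on different components of the boundary, and one must verify both that $g\in W^{1,\infty}$ with uniformly bounded norm and that $|u_\varepsilon^k-g|\leq L\,\dist(\cdot,\partial U^k\cap B_r)$ holds up to possibly shrinking $r$ (the linear bound near the free boundary comes from $C^{1,1}$-regularity of $u_\varepsilon^k$, but one should be careful that the constant is $k$-independent — this is exactly why the mollifications are taken with uniformly bounded $W^{1,\infty}$ and $W^{2,\infty}$-on-each-side norms). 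A secondary point to be careful about is that Lemma \ref{control} requires the two points to be close to $\partial U$ relative to $r-|x|$; points lying deep in the harmonic region are instead handled directly by interior estimates, and the two regimes must be glued along a chain of balls as in the proof of Lemma \ref{control} itself.
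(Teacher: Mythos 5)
There is a genuine gap, and it sits at the heart of your approximation scheme. The obstacles $\phi_{\varepsilon},\psi_{\varepsilon}$ are only $C^{0,1}(\overline{D}_{\delta_{\varepsilon}})$: they are $C^{1,1}$ separately on $\overline{D}$ and on $\overline{D}_{\delta_{\varepsilon}}\setminus D$, but their gradients jump across $\partial D$ (the harmonic extension to $D_{\delta_\varepsilon}\setminus\overline D$ has no reason to match normal derivatives with $\phi$ on $\partial D$). Consequently, \emph{no} smooth approximations $\phi_{\varepsilon}^k\to\phi_{\varepsilon}$ can have uniformly bounded second derivatives near $\partial D$ — mollification smears the gradient jump over a layer of width $\sim 1/k$ in which $|D^2\phi_{\varepsilon}^k|\sim k$ — and the same goes for your claimed ``uniformly bounded $W^{2,\infty}$-on-each-side norms,'' since uniform two-sided $C^{1,1}$ bounds would force the limit to be $C^{1,1}$, which $\phi_\varepsilon$ is not. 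This kills the three steps your argument rests on: the classical Chipot/Caffarelli--Kinderlehrer $C^{1,1}$ bound for $u_{\varepsilon}^k$ (whose constant depends on $\|D^2\phi^k_\varepsilon\|_\infty$, $\|D^2\psi^k_\varepsilon\|_\infty$), the bound $\|\Delta u_{\varepsilon}^k\|_{L^\infty}\leq CL$, and the $k$-independent quadratic-growth estimate $|u_{\varepsilon}^k-\phi_{\varepsilon}^k|\lesssim L\,\mathrm{dist}(\cdot,\partial U^k)^2$ that you use to verify the hypothesis of Lemma \ref{control}. The blow-up happens exactly on $\partial D$, which is exactly where the Lipschitz continuity of $u_\varepsilon$ is in question, so it cannot be localized away.

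The paper's proof avoids this by approximating in a different direction: it does not smooth the obstacles but \emph{separates} them, setting $\phi^{\sigma}=\phi_\varepsilon-\sigma$ and $\psi^{\sigma}=\psi_\varepsilon+\sigma$. This makes the two coincidence sets $\Lambda_1^\sigma,\Lambda_2^\sigma$ compact and disjoint, so near each one the approximate minimizer $v^\sigma$ solves a \emph{single} obstacle problem with a merely Lipschitz obstacle; the growth estimate used there is the \emph{linear} one, $\sup_{B_{r/2}(x^*)}|v^{\sigma}-\phi^{\sigma}|\leq C\|\nabla\phi\|_{\infty}r$ (as in \cite{C 1988}, \cite{LFH 2016}), which depends only on the Lipschitz norm of the obstacle and hence survives. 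Lemma \ref{control} is then applied with $g$ the single relevant obstacle, the gradient bound is propagated to all of $D_{\delta_\varepsilon}$ by the maximum principle for the harmonic function $\nabla v^{\sigma}$ on the noncoincidence set together with a boundary estimate on $\partial D_{\delta_\varepsilon}$, and one passes to the limit $\sigma\to0$ by Arzel\`a--Ascoli and uniqueness of the minimizer. If you want to repair your argument, replace the mollification by this separation (or otherwise arrange that the two contact sets are locally disjoint and invoke only Lipschitz-obstacle estimates); any route through uniform $C^{1,1}$ control of approximants is blocked by the gradient jump of $\phi_\varepsilon,\psi_\varepsilon$ across $\partial D$.
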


\begin{proof}
In the proof, we would often omit the subscript $\varepsilon$ in $u_{\varepsilon}$, $\phi_\varepsilon$, $\psi_\varepsilon$ and $D_{\delta_\varepsilon}$ for simplicity. Define for $0<\sigma<1$,
$$\phi^{\sigma}=\phi-\sigma,\quad\psi^{\sigma}=\psi+\sigma\quad\text{in}~D_\delta,$$
and
$$K^{\sigma}=\{v\in H^1(D_{\delta})\mid \phi^{\sigma}\leq v\leq\psi^{\sigma}~\text{a.e.~in}~D_{\delta},~v=u~\text{on}~\partial D_{\delta}\}.$$
Denote $v^{\sigma}$ the minimizer of the energy functionl $\int_{D_{\delta}}|\nabla v|^2\,\mathrm{d}x$ over $K^{\sigma}$. By Lemma \ref{u-epsi},  we can apply the pointwise regularity result of double obstacle problem in \cite{DMV 1989} to obtain $v^{\sigma}\in C^0(D_{\delta})$, and so the coincidence sets
$$\Lambda^{\sigma}_1:=\{x\in D_{\delta}\mid v^{\sigma}(x)=\phi^{\sigma}(x)\}\quad\text{and}
\quad\Lambda^{\sigma}_2:=\{x\in D_{\delta}\mid v^{\sigma}(x)=\psi^{\sigma}(x)\}$$
are compact and disjoint. It is easy to check
\begin{equation}\label{eq:v-sigma-harm}
\Delta v^\sigma=0\quad\text{in}~D_\delta\setminus(\Lambda^{\sigma}_1\cup\Lambda^{\sigma}_2).
\end{equation}
We will take a smooth open set $V_1$ such that
$$\Lambda^{\sigma}_1\Subset V_1\Subset D_{\delta}\setminus\Lambda_2^\sigma.$$
This implies
$$\int_{V_1}|\nabla v^\sigma|^2\,\mathrm{d}x=\min\left\{\int_{V_1}|\nabla v|^2\,\mathrm{d}x\mid v\in H^1(V_1),~v\geq\phi^\sigma~\text{a.e.~in}~V_1,~v=v^\sigma~\text{on}~\partial V_1\right\}.$$
That is, $v^\sigma$ restricted to $V_1$ solves the single obstacle problem with $\phi^\sigma$ as obstacle.

We claim that there exists a smooth neighborhood $U_1\subset V_1$ of $\Lambda_1^{\sigma}$ such that
\begin{equation}\label{est-1}
|\nabla v^{\sigma}|\leq C\|\nabla\phi\|_{L^{\infty}(D_{\delta})}\quad\text{a.e.}~\text{in}~U_1,
\end{equation}
where $C=C(n)$ is a positive constant. Indeed, it is sufficient to show that
\begin{equation}\label{double-gra-est}
|v^{\sigma}(x)-v^{\sigma}(y)|\leq C\|\nabla\phi\|_{L^{\infty}(D_{\delta})}|x-y|
\end{equation}
for any $x,y\in V_1$ satisfying
\begin{equation*}
\dist(x,\Lambda_1^{\sigma})<\frac{1}{4}(r_0-|x-x^*|)\quad\text{and}\quad\dist(y,\Lambda_1^{\sigma})<\frac{1}{4}(r_0-|y-x^*|)
\end{equation*}
for some $x^*\in\Lambda_1^{\sigma}$, where $r_0=\frac{1}{2}\dist(\Lambda_1^{\sigma},\partial V_1)$.
To show \eqref{double-gra-est}, we consider three cases:
\begin{enumerate}
\item [(1)] Suppose that $x,y\in\Lambda_1^{\sigma}$. Then
$$|v^{\sigma}(x)-v^{\sigma}(y)|=|\phi(x)-\phi(y)|\leq\|\nabla\phi\|_{L^{\infty}(D_{\delta})}|x-y|.$$
\item [(2)] Suppose that $x\in V_1\setminus\Lambda_{1}^{\sigma}$, $y\in\Lambda_1^{\sigma}$. Take $x^*\in\partial\Lambda_1^{\sigma}$ such that
$\dist(x,\partial\Lambda_1^{\sigma})=|x-x^*|$. It follows from (1) that
\begin{equation*}
\begin{split}
|v^{\sigma}(x)-v^{\sigma}(y)|&\leq|v^{\sigma}(x)-v^{\sigma}(x^*)|+|v^{\sigma}(x^*)-v^{\sigma}(y)|\\
&\leq C\|\nabla\phi\|_{L^{\infty}(D_{\delta})}|x-x^*|+C\|\nabla\phi\|_{L^{\infty}(D_{\delta})}|x^*-y|\\
&\leq C\|\nabla\phi\|_{L^{\infty}(D_{\delta})}|x-y|,
\end{split}
\end{equation*}
where we also used the fact that
$$|v^{\sigma}(x)-v^{\sigma}(x^*)|\leq C(n)\|\nabla\phi\|_{L^{\infty}(V_1)}|x-x^{*}|$$
in the second inequality, which is proved in single obstacle problems (see for instance \cite[Theorem 3.1]{LFH 2016}).
\item [(3)] Suppose that $x,y\in V_1\setminus\Lambda_{1}^{\sigma}$. Using the standard estimates in single obstacle problems (see \cite[Lemma 2]{C 1988}), we have
$$\sup_{B_{\frac{r}{2}}(x^*)}|v^{\sigma}(x)-\phi^{\sigma}(x)|\leq C(n)\|\nabla\phi\|_{L^{\infty}(D_{\delta})}r,$$
for any $x^*\in\Lambda_1^{\sigma}$ and $r<r_0$. By \eqref{eq:v-sigma-harm}, we can apply Lemma \ref{control} to $v^{\sigma}$ and obtain
$$|v^{\sigma}(x)-v^{\sigma}(y)|\leq C\|\nabla\phi\|_{L^{\infty}(D_{\delta})}|x-y|.$$
\end{enumerate}
The claim is proved.

To continue we apply similar arguments to $-v^{\sigma}$ and $-\psi^{\sigma}$ and find a smooth neighborhood $U_2$ of $\Lambda^{\sigma}_2$ such that
\begin{equation}\label{est-2}
|\nabla v^{\sigma}|\leq C||\nabla\psi||_{L^{\infty}(D_{\delta})}\quad\text{a.e.~in}~U_2.
\end{equation}
On the other hand, note that $v^{\sigma}$ is harmonic for some neighborhood of $\partial D_{\delta}$ in $D_{\delta}$ due to \eqref{eq:v-sigma-harm}. Together with the boundedness of $v^\sigma$ in $D_{\delta}$ and smooth boundary data $u$ on $\partial D_{\delta}$, the boundary regularity result (see for instance \cite[Theorem 8.33]{GT 2001}) implies
\begin{equation}\label{est-3}
\begin{split}
\|\nabla v^{\sigma}\|_{L^{\infty}(\partial D_{\delta})}&\leq C\left(\|v^{\sigma}\|_{L^{\infty}(D_{\delta})}+\|u\|_{W^{2,\infty}(\partial D_{\delta})}\right)\\
&\leq C\left(\|\phi-1\|_{L^{\infty}(D_{\delta})}+\|\psi+1\|_{L^{\infty}(D_{\delta})}+\|u\|_{W^{2,\infty}(\partial D_{\delta})}\right),
\end{split}
\end{equation}
where $C$ is independent of $\sigma$. By \eqref{eq:v-sigma-harm}, $\nabla v^{\sigma}$ is also harmonic in $D_{\delta}\setminus(\Lambda^{\sigma}_1\cup\Lambda^{\sigma}_2)$.
It follows from \eqref{est-1}, \eqref{est-2}, \eqref{est-3} and the maximum principle that
$$|\nabla v^{\sigma}|\leq C\quad\text{in}~D_\delta,$$
where $C$ is independent of $\sigma$. By Arzela-Ascoli theorem, there exists a function $v$ such that up to a subsequence as $\sigma\rightarrow0$, for any $0<\alpha<1$,
$$v^{\sigma}\rightarrow v\quad\text{in}~C^{0,\alpha}(\overline{D}_{\delta})$$
and
$$v^{\sigma}\rightarrow v\quad\text{weakly~in}~H^1(D_{\delta}).$$
Moreover, $\phi\leq v\leq\psi$ in $D_\delta$ and $v\in C^{0,1}(\overline{D}_{\delta})$.

To this end, we check that the limit function $v$ is actually $u$. By the minimality of $v^{\sigma}$, we have the variational inequality
$$\int_{D_{\delta}}\nabla v^{\sigma}\cdot\nabla(w-v^{\sigma})\,\mathrm{d}x\geq0,\quad\forall~w\in K^i_{\varepsilon}\subset K^{\sigma}.$$
Using the weak convergence in $H^{1}(D_{\delta})$, we have
$$\int_{D_{\delta}}\nabla v\cdot\nabla(w-v)\,\mathrm{d}x\geq0,\quad\forall~w\in K^i_{\varepsilon}.$$
Equivalently, $v$ minimizes $J^i_{\varepsilon}$ over $K^i_{\varepsilon}$. Note that there exists a unique minimizer of $J^i_{\varepsilon}$ in $K^i_{\varepsilon}$. The proof is finished by Lemma \ref{u-epsi}.
\end{proof}

\begin{remark}\label{rk:class-d-obstacle}
In the proof of Lemma \ref{Lip}, we actually prove Lipschitz continuity of solutions to general double obstacle problem with Lipschitz continuous obstacles. Specifically, let $U$ be a smooth and bounded domain in $\mathbb{R}^n$, and let $\phi,\psi\in C^{0,1}(\overline{U})$ and $g\in C^{1,1}(\overline{U})$ satisfying
$$\phi\leq\psi~\mathrm{in}~U\quad\mathrm{and}\quad\phi\leq g\leq\psi~\mathrm{on}~\partial U.$$
If $u$ is a minimizer of the energy functional $\int_{U}|\nabla v|^2\,\mathrm{d}x$ over the set
$$K_{\phi,\psi}:=\{v\in H^1(U)\mid\phi\leq v\leq\psi~\mathrm{a.e.~in}~U,~v=g~\mathrm{on}~\partial U\},$$
then $u\in C^{0,1}(\overline{U})$.
\end{remark}

Summing up, we now have all ingredients to present the proof of Theorem \ref{thm:main 1}.

\begin{proof}[Proof of Theorem \ref{thm:main 1}]
On the strength of Proposition \ref{small epsi}, the existence of a solution $u$ to problem \eqref{eq:P} is given by Lemma \ref{lem:P-e,existence}. By Lemma \ref{prop-2} (1), Proposition \ref{prop:u-e bdd spt} and Lemma \ref{Lip}, $u$ is Lipschitz continuous in $\mathbb{R}^n$. The regularity of $\partial\{u>0\}\setminus\overline{D}$ follows from Theorem \ref{properties} (5).
\end{proof}

Finally, combining Proposition \ref{small epsi} and Lemma \ref{u-epsi}, we prove Theorem \ref{thm:main 2}.

\begin{proof}[Proof of Theorem \ref{thm:main 2}]
By Proposition \ref{small epsi}, $u$ is also a solution to problem \eqref{eq:P-e} for any $0<\varepsilon<\varepsilon_0$.
Fix some $0<\varepsilon<\varepsilon_0$. We may restrict the consideration to a small ball $B_r(x_0)\subset D_{\delta_\varepsilon}$ with $x_0\in\Gamma$. After an appropriate translation and rotation, we can assume that $x_0=0$ and
$$B_r(x_0)\cap D=B_r^+\quad\text{and}\quad B_r(x_0)\cap\partial D\subset\Gamma.$$
Set $\mathcal{N}=D_{\delta_\varepsilon}$. By Lemma \ref{prop-2} (1), we have $u\in C^{1,1}_{\mathrm{loc}}(D)$. Combining \eqref{eq:Delta-u-e=0}, it suffices to prove
\begin{equation}\label{eq:u-C1,1/2}
u\in C^{1,\frac{1}{2}}\left(B_{\frac{r}{2}}^{\pm}\cup B'_{\frac{r}{2}}\right).
\end{equation}
We divide the rest of the proof into four steps.

Step 1. We prove that there exists a constant $C$ depending only on $||\phi||_{C^{1,1}(\overline{D})}$ and $||\psi||_{C^{1,1}(\overline{D})}$ such that
\begin{equation}\label{bdd-r}
|\Delta u|\leq C\quad\text{a.e.~in}~B_{\frac{3}{4}r}^{\pm}.
\end{equation}
Indeed, Lemma \ref{u-epsi} yields that $u$ solves the double obstacle problem in $B_r$ with $\phi_\varepsilon$ and $\psi_\varepsilon$ as obstacles. We thus get $u$ satisfies the Euler-Lagrange equation
\begin{equation}\label{E-L}
\Delta u=\Delta\phi\chi_{\{u=\phi\}}+\Delta\psi\chi_{\{u=\psi\}}
-\Delta\phi\chi_{\{\phi=\psi\}}\quad\text{in}~B_{\frac{3}{4}r}^{+}.
\end{equation}
By Lemma \ref{prop-2} (1), we have $u\in C_{\mathrm{loc}}^{1,1}(B_r^{+})$ and so \eqref{E-L} holds a.e. in $B_{\frac{3}{4}r}^{+}$.
On the other hand, it follows from \eqref{eq:Delta-u-e=0} that
$$\Delta u=0\quad\text{in}~B_{\frac{3}{4}r}^-,$$
and \eqref{bdd-r} is proved.

Step 2. We claim that $u$ solves single piecewise obstacle problems locally. Since $\phi_\varepsilon<\psi_\varepsilon$ on $B'_r$, together with the continuity of $\phi_\varepsilon$ and $\psi_\varepsilon$, then $$\phi_\varepsilon<\psi_\varepsilon\quad\text{in}~V,$$
where $V$ is a smooth neighborhood of $B'_r$ in $B_r$.
Then the coincidence sets
$$\Lambda_1:=V\cap\{u=\phi_\varepsilon\}\quad\text{and}\quad \Lambda_2:=V\cap\{u=\psi_\varepsilon\}$$
are disjoint. We may assume that $\Lambda_1$ and $\Lambda_2$ are nonempty, as otherwise the claim becomes trivial. It follows from Lemma \ref{u-epsi} that
\begin{equation}\label{lower ob}
\int_{V_1}|\nabla u|^2\,\mathrm{d}x=\min\left\{\int_{V_1}|\nabla v|^2\,\mathrm{d}x\mid v\in H^1(V_1),~v\geq\phi_\varepsilon~\text{in}~V_1,~v=u~\text{on}~\partial V_1\right\},
\end{equation}
\begin{equation}\label{upper ob}
\int_{V_2}|\nabla u|^2\,\mathrm{d}x=\min\left\{\int_{V_2}|\nabla v|^2\,\mathrm{d}x\mid v\in H^1(V_2),~v\leq\psi_\varepsilon~\text{in}~V_2,~v=u~\text{on}~\partial V_2\right\},
\end{equation}
where $V_1$, $V_2\subset V$ are smooth open sets satisfying
$$\Lambda_1\subset V_1\subset V\setminus\Lambda_2\quad\text{and}\quad\Lambda_2\subset V_2\subset V\setminus\Lambda_1.$$
By \eqref{lower ob}, $u_\varepsilon$ restricted to $V_1$ solves the single piecewise smooth obstacle problem studied in \cite{PT 2010} with $\phi_\varepsilon$ as obstacle. Also, by \eqref{upper ob}, $-u_\varepsilon$ restricted to $V_2$ solves such single obstacle problem with $-\psi_\varepsilon$ as obstacle.

Step 3. We show that $u(\cdot,0)$ is $C^{1,\frac{1}{2}}$ on $B_{\frac{r}{2}}'$. Denote $$\nabla'=(\partial_{x_1},\partial_{x_2},\cdots,\partial_{x_{n-1}}).$$
Since $u\in C^{0,1}(\overline{B}_r)$ by Lemma \ref{Lip}, we only need to check that
\begin{equation}\label{Holder 1/2}
|\nabla'u(x)-\nabla'u(y)|\leq C|x-y|^{\frac{1}{2}},\quad\forall~x,y\in B'_{\frac{r}{2}}.
\end{equation}
Denote
$$V'_i=V_i\cap B'_r~\mathrm{and}~\Lambda'_i=\Lambda_i\cap B'_r,\quad i=1,2.$$
We will take $V_1$ and $V_2$ in Step 2 such that
$$V'_1\subset\left\{x\in B'_r\mid \dist(x,\Lambda'_2)>\frac{d_0}{4}\right\},\quad V'_2\subset\left\{x\in B'_r\mid \dist(x,\Lambda'_1)>\frac{d_0}{4}\right\},$$
where $d_0=\dist\{\Lambda'_1,\Lambda'_2\}>0$. We also take
$$U_1=:\left\{x\in V'_1\mid \dist(x,\Lambda'_2)>\frac{d_0}{3}\right\},\quad U_2:=\left\{x\in V'_2\mid\dist(x,\Lambda'_1)>\frac{d_0}{3}\right\},$$
with $U_1\cup U_2=B'_r$.

To show \eqref{Holder 1/2}, we consider two cases:
\begin{enumerate}
\item [(1)] Suppose that $x,y\in U_1$. We may apply the local regularity theorem in \cite{PT 2010} to $u$ and $\phi_\varepsilon$ in $V_1$. It follows that
$$|\nabla'u(x)-\nabla'u(y)|\leq C|x-y|^{\frac{1}{2}}.$$
Likewise, if $x,y\in U_2$, \eqref{Holder 1/2} also follows.
\item [(2)] Suppose that $x\in U_1\setminus U_2$, $y\in U_2\setminus U_1$. Then $|x-y|\geq\frac{d_0}{3}$. Note that the local regularity theorem in \cite{PT 2010} implies that $|\nabla'u|$ is bounded in $B'_{\frac{r}{2}}$. Hence,
\begin{equation*}
\begin{split}
|\nabla'u(x)-\nabla'u(y)|&\leq|\nabla'u(x)|+|\nabla'u(y)|\\
&\leq C||\nabla'u||_{L^{\infty}\big(B'_{\frac{r}{2}}\big)}\left(\frac{|x-y|}{d_0}\right)^{\frac{1}{2}}\\
&\leq C|x-y|^{\frac{1}{2}}.
\end{split}
\end{equation*}
\end{enumerate}

Step 4. Combining Step 1 and Step 3, \eqref{eq:u-C1,1/2} follows from the boundary regularity theorem (see for instance \cite[Theorem 8.34]{GT 2001}).
This completes the proof.
\end{proof}

\section*{Acknowledgments}
The authors would like to thank professors Jiguang Bao and Hui Yu for their helpful suggestions and discussions. X. Li was supported by the China Scholarship Council (No. 202006040127). Part of this manuscript was written while X. Li was visiting the Department of Mathematics ``Federigo Enriques" at Universit\`{a} degli Studi di Milano, which is acknowledged for the hospitality.

\end{document}